\NeedsTeXFormat{LaTeX2e}
\documentclass{amsart}
\usepackage{amssymb}
\usepackage{amscd}
\usepackage[latin1]{inputenc}
\usepackage{amsmath}
\usepackage{amsfonts}
\usepackage{latexsym}
\usepackage{verbatim}
\usepackage{graphicx}
\usepackage{epsfig}
\usepackage{faktor}
\usepackage{xcolor}
\usepackage{soul}

\colorlet{RED}{red}
\colorlet{BLUE}{blue}
\setstcolor{red}

\newtheorem{theorem}{Theorem}[section]
\newtheorem{corollary}[theorem]{Corollary}
\newtheorem{lemma}[theorem]{Lemma}

\theoremstyle{definition}
\newtheorem{definition}[theorem]{Definition}

\newtheorem{remark}[theorem]{Remark}

\newcommand{\zz}{\mathbb{Z}}
\newcommand{\grp}{\faktor{\textrm{Aut}(X,\sigma)}{\langle \sigma\rangle}}
\newcommand{\aut}{\textrm{Aut}(X,\sigma)}

\begin{document}

\title[Local finiteness and aut. groups of low complexity subshifts]{Local finiteness and automorphism groups of low complexity subshifts}%Every infinite transitive subshift of sufficiently slow complexity growth has an automorphism group whose quotient by the subgroup generated by the shift is locally finite, and any locally finite group can appear as such a quotient}

\author{Ronnie Pavlov}
\address{Ronnie Pavlov\\
Department of Mathematics\\
University of Denver\\
2390 S. York St.\\
Denver, CO 80208}
\email{rpavlov@du.edu}
\urladdr{http://www.math.du.edu/$\sim$rpavlov/}

\author{Scott Schmieding}
\address{Scott Schmieding\\
Department of Mathematics\\
University of Denver\\
2390 S. York St.\\
Denver, CO 80208}
\email{scott.schmieding@du.edu}
%\urladdr{http://www.math.du.edu/$%5Csim$rpavlov/}

\thanks{The first author gratefully acknowledges the support of a Simons Foundation Collaboration Grant.}
\keywords{Symbolic dynamics, word complexity, automorphism groups, locally finite groups}
\renewcommand{\subjclassname}{MSC 2020}
\subjclass[2020]{Primary: 37B10; Secondary: 20F65}

\begin{abstract}
We prove that for any transitive subshift $X$ with word complexity function $c_n(X)$,
if $\liminf \frac{\log (c_n(X)/n)}{\log \log \log n} = 0$, then the quotient group $\grp$ of the automorphism group of $X$ by the subgroup generated by the shift $\sigma$ is locally finite. We prove that significantly weaker upper bounds on $c_n(X)$ imply the same conclusion if the Gap Conjecture from geometric group theory is true.

Our proofs rely on a general upper bound for the number of automorphisms of $X$ of range $n$ in terms of word complexity, which may be of independent interest. As an application, we are also able to prove that for any subshift $X$, if
$\frac{c_n(X)}{n^2 (\log n)^{-1}} \rightarrow 0$, then $\aut$ is amenable, improving a result of Cyr and Kra.

In the opposite direction, we show that for any countable infinite locally finite group $G$ and any unbounded increasing $f: \mathbb{N} \rightarrow \mathbb{N}$, there exists a minimal subshift $X$ with $\grp$ isomorphic to $G$ and $\frac{c_n(X)}{nf(n)} \rightarrow 0$.

\end{abstract}

\maketitle

%\rp{Global notes:
%\begin{enumerate}
%\item Remark about our examples being first non-virtually abelian, maybe link to (7) from our list.
%\item Remark about why we need greater generality of block concatenation; if we just used Toeplitz, would be abelian. \ssred{added at beginning of construction section}
%\item Scott, look for places where you want to replace $\phi \in \textrm{Aut}_k(X, \sigma)$ by ``$\phi$ of range $k$.''
%\item
%I think I've changed my mind on the typesetting of $\grp$.
%\item
%Changed all aut grps (including FIP) to $(X, \sigma)$ instead of $X$.
%Occasionally there are some $\textnormal{Aut}(X)$'s, that I think we should convert to $\aut$'s. I think this occurs throughout the whole thing.
%\item
%Throughout, we use the terminology '... a subshift $X$... ', would it be better to say '... a subshift $(X,\sigma)$...'?
%\item
%Move definition of $\textrm{Aut}^{(FIP)}(X)$ out of statement of Theorem \ref{autbd2}
%\item
%Change alphabet $A$ to $\mathcal{A}$.
%\item
%I don't think that inverses are used outside of Lemma~\ref{pres1}, so I didn't use inverses in $Aut_n$ definition.
%Check whether $\aut_{n}$ means range $n$ or range $n$ for both it and its inverse.
%\item
%Look for remaining $e$ to $\textrm{id}$ switches
%\item
%For the whole applying an automorphism to word thing, perhaps the easiest way is to just, in each case, choose a block code representing it and use that.
%\item
%Could make discussion at end of Section 2.1 cleaner somehow?
%\item
%Unify $\alpha_{\tau}$ and $\phi_{k, h}$ notation? \ssred{I don't think necessary}
%\end{enumerate}}

\section{Introduction}

This work deals with symbolic dynamics, which is the study of symbolically defined topological dynamical systems called \textbf{subshifts}. A subshift is simply a closed and shift-invariant subset of $\mathcal{A}^{\mathbb{Z}}$ for some finite set $\mathcal{A}$. One way of measuring the size of a subshift $X$ is via its word complexity function $c_n(X)$; $c_n(X)$ is the number of different $n$-letter strings (or \textbf{words}) appearing within points of $X$.

Another sense of `complexity' for a subshift comes from its group of automorphisms; an \textbf{automorphism} of a subshift $X$ is a homeomorphism from $X$ to itself which commutes with the shift map $\sigma: X \rightarrow X$ defined by $(\sigma x)(n) = x(n+1)$. The set of automorphisms $\aut$ has an obvious group structure from composition, and turns out to always be countable. By definition, $\sigma$ itself is always in $\aut$, and $\langle \sigma \rangle$, the subgroup generated by $\sigma$, is always a normal subgroup of $\aut$. (See Section~\ref{defs} for more details.)

In this paper, we continue a line of research which has been fruitfully developed in many recent works (\cite{CKlinear}, \cite{CKstretch}, \cite{CKquad}, \cite{CKslow}, \cite{DDMP1}, \cite{DDMP2}, \cite{saloFG}, \cite{salofull}), namely:
in what sense must subshifts with low complexity functions have automorphism groups which are `small' or restricted? Though we do not claim it to be complete, we summarize some recent results in this area. Some of the following results include the hypothesis of transitivity/minimality of $X$; we postpone definitions to Section~\ref{symb}.

\begin{enumerate}
\item $X$ minimal and $\liminf \frac{c_n(X)}{n} < \infty$ $\Longrightarrow$ $\grp$ finite (\cite{CKlinear}, \cite{DDMP1})
\item $X$ transitive and $\limsup \frac{c_n(X)}{n} < \infty$ $\Longrightarrow$ $\grp$ finite (\cite{CKlinear})
\item $\limsup \frac{c_n(X)}{n} < \infty$ $\Longrightarrow$ all f.g. subgroups of $\aut$ are virtually $\mathbb{Z}^d$ (\cite{CKlinear})
\item $\frac{c_n(X)}{n^2 (\log n)^{-2}} \rightarrow 0$ $\Longrightarrow$ $\aut$ amenable (\cite{CKslow})
\item $X$ transitive and $\liminf \frac{c_n(X)}{n^2} = 0$ $\Longrightarrow$ $\grp$ periodic (\cite{CKquad})
\item $\liminf \frac{c_n(X)}{n^2} = 0$ $\Longrightarrow$ $\aut$ does not contain a free semigroup on two generators (\cite{CKslow})
\item $X$ minimal and $\frac{c_n(X)}{n^3} \rightarrow 0$ $\Longrightarrow$ every f.g. torsion-free subgroup of $\aut$ is virtually abelian (\cite{CKstretch})
\item $X$ minimal and $\exists d \in \mathbb{N}$ with $\frac{c_n(X)}{n^d} \rightarrow 0$ $\Longrightarrow$ $\aut$ amenable and every f.g. torsion-free subgroup of $\aut$ is virtually nilpotent (\cite{CKstretch})
\item $X$ minimal and $\exists \beta < 1/2$ with $\frac{\log c_n(X)}{n^{\beta}} \rightarrow 0$ $\Longrightarrow$ $\aut$ amenable (\cite{CKstretch})
\end{enumerate}

Here, we wish to add some more context to the transition from linear to slightly greater complexity function; to our knowledge, up to now, there have been no complexity thresholds used between the linear one for (1)-(3) above and $o(n^2/\log^2 n)$ from (4). It is reasonable to expect that complexity extremely close to linear should place restrictions on the group structure of $\grp$ which are stronger than periodicity. Our first main result shows that for transitive subshifts, low enough complexity in fact implies that $\grp$ is locally finite.%\st{for transitive subshifts}.

\begin{theorem}\label{mainthm2}
If $X$ is an infinite transitive subshift with $\liminf \frac{\log (c_n(X)/n)}{\log \log \log n} = 0$, then $\grp$ is locally finite
(and countable).
\end{theorem}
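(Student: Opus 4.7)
The approach is to bound the growth of an arbitrary finitely generated subgroup of $\grp$ using the automorphism-counting bound advertised in the introduction, then derive a contradiction from the trivial lower bound $|B_H(k)|\geq k+1$ satisfied by every infinite finitely generated group.

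Fix a finitely generated subgroup $H=\langle \bar\phi_1,\ldots,\bar\phi_m\rangle$ of $\grp$ and pick representatives $\phi_i\in\aut$ given by block codes of radius at most some $R$. Any element of the word-length ball $B_H(k)$ has a representative in $\aut$ of radius at most $kR$, so
\[
|B_H(k)| \leq N(kR),\quad \text{where}\quad N(n) := \#\bigl\{\,\langle\sigma\rangle\text{-cosets of elements of }\aut\text{ of radius }\leq n\bigr\}.
\]

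The key input is the paper's general upper bound on $N(n)$ in terms of $c_n(X)$. Under the hypothesis $\liminf \log(c_n(X)/n)/\log\log\log n = 0$, one extracts a sequence $n_j\to\infty$ along which $c_{n_j}(X) \leq n_j(\log\log n_j)^{\epsilon_j}$ with $\epsilon_j\to 0$. Feeding this into the bound should yield $N(n_j) = o(n_j)$ (for example, any automorphism count of the form $N(n)\lesssim \exp\exp(c_n(X)/n)$ clearly suffices, via $(\log\log n)^{\epsilon_j}=o(\log\log n)$). Setting $k_j = \lfloor n_j/R\rfloor$, this gives $|B_H(k_j)| = o(k_j)$ as $j\to\infty$. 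But the Cayley graph of an infinite finitely generated group is an infinite connected bounded-degree graph, so $|B_H(k)|\geq k+1$, and for $j$ large we reach a contradiction. Hence every finitely generated $H\leq\grp$ is finite, i.e.\ $\grp$ is locally finite; countability is immediate from the standard countability of $\aut$.

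The main obstacle lies entirely in establishing the upper bound on $N(n)$. The naive block-code encoding gives $|\mathcal{A}|^{c_{2n+1}(X)}$, far too large to be useful. Obtaining a bound strong enough to force $N(n) = o(n)$ under a triple-logarithmic hypothesis on $c_n(X)/n$ should require a delicate use of transitivity (so that an automorphism is determined by its action on a single transitive point), invertibility, and tight control on how $\phi$ acts on right-special (or bispecial) words, so that $\phi$ is essentially pinned down by a permutation of a set of size on the order of $c_n(X)/n$ rather than $c_n(X)$ itself. Quotienting by $\langle\sigma\rangle$ should provide an additional savings of order $n$. Calibrating this analysis so that the hypothesis $c_n(X)/n\leq(\log\log n)^{o(1)}$ exactly suffices to drive $N(n)$ below the linear growth floor is where the novel combinatorial work must sit; the deduction of local finiteness above is by comparison quite soft.
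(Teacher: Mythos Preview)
Your overall strategy --- bound the number of radius-$n$ automorphisms (mod shifts) and contradict the trivial linear lower bound $|B_H(k)|\geq k+1$ for infinite groups --- is not the paper's argument, and the gap is precisely the step you flag as ``the main obstacle.'' The paper does \emph{not} obtain anything close to $N(n)=o(n)$. Its automorphism-counting bound (Corollary~\ref{autbd}) is
\[
|\textrm{Aut}_{\lfloor (n-1)/2 \rfloor}(X,\sigma)|\;\leq\;\bigl(c_{1+c_n(X)}(X)\bigr)^{2|A|\,(c_{n+1}(X)-c_n(X))},
\]
and under the hypothesis the best one can extract (via Lemma~\ref{diff}, which locates $n$ where simultaneously $c_n(X)\leq n(\log\log n)^{\epsilon}$ and $c_{n+1}(X)-c_n(X)\leq(\log\log n)^{\epsilon}$) is a bound of order $n^{4|A|(\log\log n)^{2\epsilon}}$. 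This is superpolynomial, not sublinear, so no contradiction with $|B_H(k)|\geq k+1$ is available. Your suggested target $N(n)\lesssim \exp\exp(c_n(X)/n)$ has no basis in the paper's combinatorics: the exponent in the actual bound is governed by the \emph{first difference} $c_{n+1}(X)-c_n(X)$, and the base involves $c_{1+c_n(X)}(X)$, neither of which collapses to a function of $c_n(X)/n$ alone. Quotienting by $\langle\sigma\rangle$ saves at most a factor of $n$, which is negligible against $n^{(\log\log n)^{2\epsilon}}$.

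What the paper actually does with this superpolynomial bound is invoke the Shalom--Tao theorem (Theorem~\ref{ST}): there is an absolute $c>0$ such that $|B_N(S)|\leq N^{c(\log\log N)^c}$ for a single large $N$ forces virtual nilpotence. The bound $n^{4|A|(\log\log n)^{2\epsilon}}$ is calibrated exactly to fall under this threshold once $\epsilon$ is small. So every finitely generated subgroup of $\grp$ is virtually nilpotent. The paper then combines this with the Cyr--Kra periodicity theorem (Theorem~\ref{CK}), valid since the hypothesis implies $c_n(X)=o(n^2)$: a finitely generated, virtually nilpotent, periodic group is finite. Both external ingredients --- Shalom--Tao and Cyr--Kra --- are essential; your proposal tries to avoid them by pushing the automorphism count far below what the combinatorics deliver.
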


We briefly remark that local finiteness is a strictly stronger property than periodicity (for instance, the Tarski monster groups and Grigorchuk group are periodic but not locally finite), and so this result has a strictly stronger complexity hypothesis and conclusion than (5) above.
%\ssred{Note: add comment that this result is not subsumed by the Cyr-Kra quadratic-periodic result: mention Tarski monsters and Grigorchuk group.}

%\ssred{It might be unclear what is meant by the term 'range' here - but it's the introduction, so if you're ok with it, I think it's fine.}
To prove Theorem \ref{mainthm2}, we first achieve some estimates on growth of number of automorphisms as a function of range by using left- and right-special words (Corollary~\ref{autbd}). We then use a theorem of Shalom and Tao (Theorem~\ref{ST}) to show that our growth is so slow as to force finitely generated subgroups of $\grp$ to be virtually nilpotent. Finally, we combine this with the fact that $\grp$ is known to be periodic under the hypotheses of Theorem~\ref{mainthm2} due to (5) above; thus all finitely generated subgroups of $\grp$ are virtually nilpotent and periodic, therefore finite.

There is a well-known conjecture in geometric group theory called the Gap Conjecture (see \cite{grigorgap}), which states that every finitely generated group with growth rate $e^{o(\sqrt{n})}$ (see Section~\ref{group2} for more details) has polynomial growth. The Gap Conjecture is known to hold for some classes of groups (\cite{grigorp}, \cite{grigorgap}, \cite{wilson1}, \cite{wilson2}), but is still open in general.

Variants of our first main result show that $\grp$ is locally finite under much weaker hypotheses if the Gap Conjecture is true.

\begin{theorem}\label{mainthm2b}
If $X$ is a transitive subshift with $\liminf \frac{c_n(X)}{n^{1.25} (\log n)^{-0.5}} = 0$ and the Gap Conjecture is true, then $\grp$ is locally finite (and countable).
\end{theorem}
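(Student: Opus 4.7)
The plan is to mimic the proof of Theorem~\ref{mainthm2}, substituting the Gap Conjecture for the quantitative Gromov theorem of Shalom and Tao. First, I would apply Corollary~\ref{autbd} to translate the hypothesis on $c_n$ into an upper bound on the number $a_n$ of automorphisms of range at most $n$. For any finitely generated subgroup $H \leq \grp$, fix lifts of a finite generating set to automorphisms of range at most some $R$. Composing $k$ such lifts produces an automorphism of range at most $kR$, so that every element of the word-metric ball $B_H(k)$ is represented (modulo $\langle \sigma \rangle$) by an automorphism of range at most $kR$, giving $|B_H(k)| \leq a_{kR}$.

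The next step is to check that the exponent $1.25$ and the $\sqrt{\log n}$ correction factor in the complexity hypothesis are calibrated exactly so that, along a subsequence $n_j$ with $c_{n_j}/(n_j^{1.25}(\log n_j)^{-0.5}) \to 0$, the estimate from Corollary~\ref{autbd} produces $a_{n_j} = \exp(o(\sqrt{n_j}))$. Pulling this back through $n = kR$ then yields $|B_H(k_j)| = e^{o(\sqrt{k_j})}$ along a corresponding subsequence $k_j$. If the Gap Conjecture holds, any finitely generated group whose growth function satisfies this bound along some subsequence must have polynomial growth, and so by Gromov's theorem $H$ is virtually nilpotent.

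Finally, since $n^{1.25}(\log n)^{-0.5} = o(n^2)$, result (5) of the introduction applies and $\grp$ is periodic, hence $H$ is a periodic finitely generated virtually nilpotent group. Any such group is finite (its finite-index nilpotent subgroup is finitely generated, nilpotent, and torsion, hence finite). As this holds for every finitely generated $H \leq \grp$, the quotient $\grp$ is locally finite; countability follows from countability of $\aut$.

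The main obstacle is the second step: verifying that the quantitative bound in Corollary~\ref{autbd} combines with the precise complexity hypothesis $c_n = o(n^{1.25}(\log n)^{-0.5})$ (along a subsequence) to give exactly the subexponential rate $e^{o(\sqrt{k})}$ needed to invoke the Gap Conjecture. This is a calibration of exponents rather than a deep obstruction, but requires careful bookkeeping of the logarithmic factors so that the $\sqrt{\log n}$ correction in the hypothesis exactly cancels the logarithmic loss coming from the left/right-special word counting in Corollary~\ref{autbd}.
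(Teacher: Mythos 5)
Your overall strategy matches the paper's: bound $|\textrm{Aut}_n(X,\sigma)|$ via Corollary~\ref{autbd}, use additivity of ranges to bound balls in finitely generated subgroups, invoke the Gap Conjecture in place of the Shalom--Tao theorem, and finish with periodicity from Theorem~\ref{CK} (finitely generated $+$ periodic $+$ virtually nilpotent $\Rightarrow$ finite). However, there is a genuine gap at exactly the step you dismiss as ``calibration of exponents.'' The bound of Corollary~\ref{autbd} has the \emph{first difference} $c_{n+1}(X)-c_n(X)$ in the exponent, and a $\liminf$ hypothesis on $c_n(X)$ gives no control of this difference at the same values of $n$: along your subsequence $n_j$ the difference could be as large as order $c_{n_j}(X)\approx n_j^{1.25}(\log n_j)^{-0.5}$ (the trivial bound $c_{n+1}-c_n\le (|A|-1)c_n$ is all you get), in which case the estimate from Corollary~\ref{autbd} only yields something like $\exp\bigl(C\,n_j^{1.5}(\log n_j)^{0.5}\bigr)$, nowhere near $e^{o(\sqrt{n_j})}$. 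So the claim that the hypothesis ``produces $a_{n_j}=\exp(o(\sqrt{n_j}))$'' directly along the $\liminf$ subsequence is false as stated; the missing ingredient is the paper's Lemma~\ref{diff}, which converts the $\liminf$ bound on $c_n(X)$ (via comparison with the partial sums $\sum_{i\le n}\lfloor \epsilon i^{0.25}(\log i)^{-0.5}\rfloor$) into a possibly sparser subsequence on which \emph{both} $c_n(X)\lesssim \epsilon n^{1.25}(\log n)^{-0.5}$ \emph{and} $c_n(X)-c_{n-1}(X)\lesssim \epsilon n^{0.25}(\log n)^{-0.5}$ hold simultaneously. Only then does the computation work: the exponent is $\approx \epsilon n^{0.25}(\log n)^{-0.5}$, the logarithm of the base $c_{1+c_n(X)}(X)$ is $\approx \epsilon n^{0.25}(\log n)^{+0.5}$ by subadditivity, and the two half-powers of $\log n$ cancel to give $e^{C|A|\epsilon^2\sqrt n}$, with $\epsilon$ arbitrary.

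Also note that the cancellation of logarithms is not against a ``logarithmic loss from the left/right-special word counting'' (Lemma~\ref{compdiff} loses nothing logarithmic); it is between the $(\log n)^{-0.5}$ in the difference bound and the $(\log n)^{+0.5}$ arising from $\log c_n(X)\approx 1.25\log n$ times the subadditivity exponent $n^{0.25}(\log n)^{-0.5}$. A smaller point: you invoke the Gap Conjecture in the form ``growth $e^{o(\sqrt k)}$ along a subsequence implies polynomial growth,'' which is formally stronger than the statement quoted in Section~\ref{group2}; the paper's own wording is similarly brisk here, but if you want to be careful you should either state the subsequence version of the conjecture you are assuming or explain how to pass from the subsequence bound to the hypothesis of the conjecture.
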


\begin{theorem}\label{mainthm2c}
If $X$ is a transitive subshift with $\frac{c_n(X)}{n^{1.5} (\log n)^{-1}} \rightarrow 0$ and the Gap Conjecture is true, then $\grp$ is locally finite (and countable).
\end{theorem}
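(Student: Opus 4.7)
The plan is to follow the overall strategy of Theorem~\ref{mainthm2}, but to replace the appeal to the Shalom--Tao result with an appeal to the Gap Conjecture; this is what permits the substantially weaker complexity hypothesis. The rest of the pipeline (counting automorphisms via Corollary~\ref{autbd}, converting to a growth bound on finitely generated subgroups of $\grp$, then upgrading ``periodic + something'' to ``finite'') stays the same.

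First, I would invoke Corollary~\ref{autbd} to obtain an upper bound on the number $N(n)$ of automorphisms of $X$ of range at most $n$, expressed as a function of $c_n(X)$. The complexity hypothesis $c_n(X)/(n^{1.5}(\log n)^{-1}) \to 0$ is calibrated so that this bound yields $N(n) = \exp(o(\sqrt{n}))$ as $n \to \infty$: informally, if Corollary~\ref{autbd} has the expected shape $N(n) \leq \exp\!\bigl(O((c_n(X)/n)\log n)\bigr)$, then plugging in the hypothesis gives $\exp(o(n^{0.5}(\log n)^{-1}\cdot \log n)) = \exp(o(\sqrt{n}))$. Verifying that the precise numerical form of Corollary~\ref{autbd} indeed outputs exactly this subexponential rate is the main calculation, and explains the choice of exponents $1.5$ and the logarithmic factor in the hypothesis.

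Next, for any finitely generated subgroup $H$ of $\grp$, I would choose lifts $\phi_1,\ldots,\phi_k \in \aut$ of a fixed finite generating set, all of range at most some common integer $r$. Any element of $H$ representable as a word of length $\leq m$ in these generators admits a lift in $\aut$ of range at most $mr$ (since ranges of sliding block codes add under composition), so its coset in $\grp$ is represented by an automorphism counted by $N(mr)$. Hence the growth function of $H$ satisfies $\beta_H(m) \leq N(mr) = \exp(o(\sqrt{m}))$. Assuming the Gap Conjecture, $H$ then has polynomial growth, and by Gromov's theorem $H$ is virtually nilpotent.

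Finally, since $c_n(X)/(n^{1.5}(\log n)^{-1}) \to 0$ certainly implies $\liminf c_n(X)/n^2 = 0$, item~(5) of the list in the introduction gives that $\grp$ is periodic. Thus the finitely generated subgroup $H$ is both virtually nilpotent and a torsion group, which forces $H$ to be finite. Since $H$ was arbitrary, $\grp$ is locally finite; countability is immediate from countability of $\aut$. The main obstacle is the first step: verifying that Corollary~\ref{autbd} is sharp enough to convert the hypothesis into the $\exp(o(\sqrt{n}))$ growth bound exactly at the Gap Conjecture threshold. Once that calibration is done, the rest of the argument is a clean combination of the Gap Conjecture, Gromov's theorem, and the previously established periodicity result.
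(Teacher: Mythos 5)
Your overall pipeline is exactly the paper's: bound the number of range-$n$ automorphisms via Corollary~\ref{autbd}, convert this (using additivity of range under composition and lifts of generators) into a growth bound of the form $e^{o(\sqrt{n})}$ for finitely generated subgroups of $\grp$, invoke the Gap Conjecture together with Gromov's theorem to get virtual nilpotence, and combine with periodicity of $\grp$ from Theorem~\ref{CK} to conclude that each finitely generated subgroup is finite. The closing steps of your argument coincide with the paper's.

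The genuine gap is precisely the step you defer as ``the main calculation.'' Corollary~\ref{autbd} does not have the shape $N(n)\leq\exp\bigl(O((c_n(X)/n)\log n)\bigr)$: it reads $|\textrm{Aut}_{\lfloor (n-1)/2\rfloor}(X,\sigma)|\leq (c_{1+c_n(X)}(X))^{2|A|(c_{n+1}(X)-c_n(X))}$, so the exponent is the \emph{first difference} of the complexity function, which is not pointwise controlled by $c_n(X)/n$; even under $c_n(X)=o(n^{1.5}(\log n)^{-1})$ the difference $c_{n+1}(X)-c_n(X)$ may be large for many $n$, so your heuristic bound does not follow from the corollary as stated. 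The paper supplies the missing ingredient via Lemma~\ref{diff}: applying it with $g(i)=\lfloor\epsilon i^{0.5}(\log i)^{-1}\rfloor$ (after checking the $\liminf$ hypothesis from the complexity assumption) produces infinitely many $n$ at which the first difference is below $\epsilon n^{0.5}(\log n)^{-1}$, while the full-limit hypothesis gives $c_n(X)<n^{1.5}$ eventually and hence $c_{1+c_n(X)}(X)\leq n^{2.25}$; only then does Corollary~\ref{autbd} yield $|\textrm{Aut}_{\lfloor (n-1)/2\rfloor}(X,\sigma)|\leq e^{4.5|A|\epsilon\sqrt{n}}$, and only along that subsequence (for each $\epsilon$), not for all $n$ as your assertion $N(n)=\exp(o(\sqrt{n}))$ suggests. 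This subsequential, ``$\epsilon$ arbitrary'' form is also how the paper feeds the bound into the Gap Conjecture. So the architecture of your proposal is right, but without the difference-control argument (the paper's Lemma~\ref{diff}) the central quantitative estimate is unproved.
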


%It must then be the case either that all subshifts with complexity as in the hypotheses of Theorems~\ref{mainthm2b} and \ref{mainthm2c} have $\grp$ locally finite, or that the Gap Conjecture is false, either of which is a fairly interesting outcome. (TRYING TO BE TOO CUTE?)

Finally, our techniques allow for a slight improvement to the theorem of Cyr and Kra referenced as (4) above, where they proved that if $(X,\sigma)$ is any subshift satisfying $c_n(X) = o(n^2/(\log^2 n))$, then $\aut$ is amenable. %(THIS NEXT RESULT COULD BE IMPROVED WITH NO EXTRA EFFORT TO THE WEAKER HYPOTHESES THAT $c_n$ IS POLYNOMIALLY BOUNDED FOR ALL $n$ AND $o(n^2 (\log n)^{-1})$ ALONG A SUBSEQUENCE... DIDN'T SEEM WORTH IT TO ME, BUT I'M NOT OPPOSED.)

%\ssred{I think the following paragraph contained between **'s should be removed. I can't strike it out, the strikeout command seems not to like striking through references.}\ssred{**}We note that Theorem~\ref{mainthm2d} cannot be improved too much; Salo and Schraudner (REFERENCE) showed that the subshift $X = Y^2$, where $Y = \{y \in \{0,1\}^{\mathbb{Z}} \ : \ y \textrm{ contains at most one } 1\}$, has $\aut$ which contains a free semigroup on two generators. Then $c_n(X) = (n+1)^2$ and $\aut$ is clearly not amenable.\ssred{**}

\begin{theorem}\label{mainthm2d}
If $X$ is a subshift with $\frac{c_n(X)}{n^2 (\log n)^{-1}} \rightarrow 0$, then $\aut$ is amenable (and countable).
\end{theorem}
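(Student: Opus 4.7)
The plan is to combine the automorphism count from Corollary~\ref{autbd} with the standard principle that subexponential growth implies amenability. Let $A_n \subset \aut$ denote the set of automorphisms of range at most $n$. I expect Corollary~\ref{autbd} to yield a bound of roughly the form $\log |A_n| \leq C \cdot c_{2n+1}(X) (\log n)/n$, improving on the corresponding Cyr--Kra estimate by a factor of $\log n$ thanks to the more careful left/right-special-word bookkeeping developed in this paper. Under the hypothesis $c_n(X) / (n^2 / \log n) \to 0$, such a bound rewrites as $\log |A_n| = o(n)$; equivalently, $|A_n|$ grows subexponentially in $n$.

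With this in hand, amenability follows by a routine argument. A countable group is amenable if and only if every finitely generated subgroup is amenable, so it suffices to show that an arbitrary finitely generated $H = \langle S\rangle \leq \aut$ is amenable. Choose $r$ large enough that every $\phi \in S$ has range at most $r$. A composition of $k$ elements of range at most $r$ has range at most $kr$, so the ball $B_H(k)$ of radius $k$ in $H$ with respect to $S$ satisfies $|B_H(k)| \leq |A_{kr}| = e^{o(k)}$. Hence $H$ has subexponential growth and is therefore amenable, by the classical fact (Adyan, Grigorchuk) that subexponential growth implies amenability. Countability of $\aut$ is immediate since $\aut = \bigcup_n A_n$ is a countable union of finite sets.

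The main obstacle, and presumably the reason this result is not immediate from Cyr--Kra, is to confirm that Corollary~\ref{autbd} genuinely saves the required factor of $\log n$ over the naive count; granted a bound of the form $\log |A_n| \leq C c_{2n+1}(X) (\log n)/n$, the hypothesis $c_n(X) = o(n^2/\log n)$ is exactly what is needed to bring $\log |A_n|$ below any linear function. The amenability step itself is a black box once the subexponential growth bound is in place, and no further ingredients beyond Corollary~\ref{autbd} and the standard growth-implies-amenability principle should be needed.
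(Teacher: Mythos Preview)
Your overall strategy (bound the number of range-$n$ automorphisms, deduce subexponential growth of every finitely generated subgroup, conclude amenability) is exactly the paper's, but there are two genuine gaps in the proposal.

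\textbf{Transitivity.} Corollary~\ref{autbd} is stated only for infinite \emph{transitive} subshifts, whereas Theorem~\ref{mainthm2d} makes no transitivity assumption. You therefore cannot invoke Corollary~\ref{autbd}. The paper instead uses Theorem~\ref{autbd2}, which holds for all subshifts but only bounds $|\textrm{Aut}^{(FIP)}_{\lfloor (n-1)/2 \rfloor}(X,\sigma)|$, the automorphisms of that range which fix every isolated periodic point. One then shows that every finitely generated subgroup of $\textrm{Aut}^{(FIP)}(X,\sigma)$ has subexponential growth, hence $\textrm{Aut}^{(FIP)}(X,\sigma)$ is amenable. Finally, the set of isolated periodic points is finite by compactness, so $\textrm{Aut}^{(FIP)}(X,\sigma)$ has finite index in $\aut$ (it is the kernel of the restriction homomorphism to the automorphism group of that finite set), and amenability passes to the finite-index overgroup.

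\textbf{Shape of the bound.} The bound coming from Corollary~\ref{autbd}/Theorem~\ref{autbd2} is not of the form $\log|A_n| \leq C\, c_{2n+1}(X)(\log n)/n$ that you guessed. It is
\[
\log|\textrm{Aut}^{(FIP)}_{\lfloor (n-1)/2 \rfloor}(X,\sigma)| \leq 2|A|\,(c_{n+1}(X)-c_n(X))\,\log c_{1+c_n(X)}(X),
\]
and the key point is that the exponent involves the \emph{first difference} $c_{n+1}(X)-c_n(X)$, which the hypothesis $c_n(X)=o(n^2/\log n)$ does not control for every $n$, only on average. The paper handles this via Lemma~\ref{diff}: for each $\epsilon>0$ there are infinitely many $n$ with $c_{n}(X)-c_{n-1}(X) < \epsilon n/\log n$. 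For large $n$ one also has $c_n(X)<n^2$, hence $c_{1+c_n(X)}(X)\leq n^4$, and combining gives $\log|\textrm{Aut}^{(FIP)}_{\lfloor (n-1)/2 \rfloor}(X,\sigma)| \leq 8|A|\epsilon n$ along that subsequence. Since $\epsilon$ was arbitrary and ball growth in a finitely generated group is submultiplicative (so a subsequential bound forces the actual growth rate down), this yields the desired subexponential growth. Your sketch skips both the passage to a good subsequence and the correct identification of which quantities the corollary actually controls.
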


Our final result is in the opposite direction, showing that no superlinear complexity threshold can impose stronger restrictions on $\grp$ than being locally finite (and countable).

\begin{theorem}\label{mainthm}
For any countable locally finite group $G$ and any unbounded increasing $f: \mathbb{N} \rightarrow \mathbb{R}$, there exists a minimal subshift $X$ with $\grp = G$ and $\frac{c_n(X)}{nf(n)} \rightarrow 0$.
\end{theorem}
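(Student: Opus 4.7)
Since $G$ is a countable locally finite group, write $G = \bigcup_{k \geq 1} G_k$ as an increasing union of finite subgroups. My approach is to construct $X$ as the limit of a hierarchical substitutive (Toeplitz-type) system in which $G_k$ acts by permutation on a family of level-$k$ blocks, with block lengths chosen to grow rapidly enough that the complexity bound is met.

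Concretely, I would pick a rapidly increasing sequence $N_1 < N_2 < \cdots$ and recursively build, for each $k$, a family $\{w_k^{(g)}\}_{g \in G_k}$ of length-$N_k$ words over a finite alphabet such that each $w_{k+1}^{(g)}$ is a specific concatenation of level-$k$ words dictated by the left regular representation of $G_{k+1}$, the left-translation action $w_k^{(g)} \mapsto w_k^{(hg)}$ of $G_k$ extends consistently to the action of $G_{k+1}$ at the next level, every level-$k$ word appears inside every level-$(k+1)$ word (to force uniform recurrence and hence minimality of the limit), and rigid distinguishing ``marker'' features are inserted so that the level-$k$ block decomposition is uniquely recoverable from the raw sequence. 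Define $X$ as the orbit closure (under $\sigma$) of any limit point $x$ whose level-$k$ factorizations agree with the $w_k^{(g)}$.

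A standard complexity analysis for such a hierarchical structure shows that for $n$ comparable to $N_k$, the number of length-$n$ subwords of $X$ is $O(n\cdot |G_k|)$: each such window is specified by the (at most $O(1)$) level-$k$ blocks it overlaps together with its offset within them, and the substitution structure allows only $O(|G_k|)$ admissible block patterns in any fixed window. Since $f$ is unbounded, choosing $N_k$ sufficiently large so that $|G_k| \leq f(N_k)/k$ (and keeping the ratio $N_{k+1}/N_k$ moderate) yields $c_n(X)/(nf(n)) \to 0$. For the automorphism group, the natural $G$-action by left translation on each block family lifts to a shift-commuting homeomorphism of $X$ (coherent across levels thanks to the consistency condition), giving an embedding $G \hookrightarrow \grp$. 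Conversely, any $\phi \in \aut$ has a finite range $r$ by Curtis--Hedlund--Lyndon; taking $k$ large enough that $N_k \gg r$, the marker rigidity forces $\phi$ to permute level-$k$ blocks among themselves in a manner compatible with the $G_k$-action, so $\phi$ differs from some element of $G_k$ by a power of $\sigma$. A compactness/pigeonhole argument across increasing $k$ then produces a unique element of $G$ corresponding to $\phi$ in $\grp$.

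The main obstacle is the construction of the rigidifying markers: they must simultaneously (i) break all symmetries other than those coming from the $G_k$-actions (so as to prevent unintended block rearrangements at every scale), (ii) propagate coherently through all levels in a manner compatible with the nested group actions, and (iii) contribute only negligibly to the word complexity so as not to inflate $c_n$ past $nf(n)$. This is the combinatorially delicate heart of the construction; I would model it on the Toeplitz-type constructions appearing in the literature on minimal subshifts with prescribed automorphism groups (e.g.\ those in \cite{DDMP1}, \cite{DDMP2}), adapted so that the complexity profile is flat enough to beat any prescribed unbounded $f$.
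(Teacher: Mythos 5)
Your plan is essentially the approach the paper takes (a hierarchical block-concatenation subshift with level-$k$ words indexed by the finite subgroups $H_k$, left-translation automorphisms consistent across levels, markers giving unique decomposability, minimality by putting every level-$k$ word into every level-$(k+1)$ word, and a Curtis--Hedlund--Lyndon rigidity argument for the converse inclusion). But the step you explicitly defer -- the design of the templates/markers -- is the mathematical content of the theorem, not a routine appendix. One must exhibit words $w_g^{(k+1)}$, $g \in H_{k+1}$, built from $\{w_{g'}^{(k)}\}_{g' \in H_k}$, such that (i) left multiplication by $H_k$ on subscripts passes through the template (Lemma~\ref{consist}); (ii) the coset of $g$ modulo $H_k$ is encoded purely structurally (in the paper, by the lengths of the terminal repetition runs); and (iii) no permutation of $A_k$ other than the $\pi_{k,h}$ maps the set of $A_{k+1}$-words into itself, and every automorphism acts, after composing with a power of $\sigma$, through such a permutation (Lemmas~\ref{pres0}--\ref{pres2}). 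Your sentence ``marker rigidity forces $\phi$ to permute level-$k$ blocks compatibly with the $G_k$-action'' asserts exactly the hardest point rather than proving it. Moreover, your proposed model for the markers -- Toeplitz-type constructions as in \cite{DDMP2} -- cannot work in general: automorphism groups of Toeplitz subshifts are abelian (the paper remarks on precisely this), whereas $G$ is an arbitrary locally finite group, so the rigidifying mechanism has to come from somewhere else (in the paper, from the run-length structure of the concatenations).

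The complexity estimate is also too optimistic as stated. For $n$ between the level-$k$ and level-$(k+1)$ scales a window can straddle many distinct runs of level-$k$ blocks, and the honest count in this construction is $c_n(X) \leq n\,|H_k|^{j+1}$ with $j$ as large as roughly $5|H_{k+1}| + 2|H_k|^2$ -- exponential in $|H_{k+1}|$, not $O(n|G_k|)$; correspondingly the scales must satisfy something like $f(b_k) > k\,|H_k|^{5|H_{k+1}| + 2|H_k|^2}$, much stronger than your $|G_k| \leq f(N_k)/k$, and ``keeping the ratio $N_{k+1}/N_k$ moderate'' conflicts with making the repetition parameter huge. This is repairable in your framework since $f$ is unbounded and the lengths can be chosen after the bound is known, but as written the quantitative step does not go through. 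Finally, you should also record the (easy but necessary) check that no nontrivial $\phi_{k,h}$ equals a power of $\sigma$, so that $G$ genuinely injects into $\grp$ (Lemma~\ref{mod}); on the other hand, your closing compactness/pigeonhole step is not needed, since once $\phi = \sigma^j \phi_{k,h}$ for a single sufficiently large $k$ the surjectivity of $G \to \grp$ is already established.
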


In particular, Theorem \ref{mainthm} provides examples of minimal subshifts having arbitrarily slow but superlinear complexity function whose automorphism group is not virtually abelian, demonstrating that the words `finitely generated torsion-free' cannot be omitted in (7) above. For example, if one applies Theorem \ref{mainthm} in the case where $G$ is a countably infinite locally finite simple group, then in this case $\aut$ can not be virtually abelian.% **refer to (7) in the list of earlier results?**}

\begin{remark}
Theorems~\ref{mainthm2} and \ref{mainthm} together completely characterize the possible $\grp$ for transitive subshifts $X$ with growth $n (\log \log n)^{o(1)}$ along a subsequence: they are exactly the locally finite groups.
\end{remark}

\begin{remark}
We would like to mention \cite{BD}, where they prove several results similar in spirit to Theorem~\ref{mainthm}, one of which realizes arbitrary Choquet simplices of invariant measures for (minimal) Toeplitz subshifts of arbitrarily low superlinear complexity. In addition to providing a class of examples satisfying our complexity assumptions in Theorems~\ref{mainthm2}-\ref{mainthm2d}, this also shows that there are subshifts with arbitrary (for instance very large) Choquet simplices and $\grp$ locally finite.
\end{remark}

%\rp{Is here the right place to mention the Toeplitz with low complexity paper? Can both say there are subshifts with crazy choquet simplices and small aut grps (which is kind of interesting) and that there are examples from a known class satisfying our complexity restrictions.}

\section{Definitions/preliminaries}\label{defs}

\subsection{Symbolic dynamics}\label{symb}

%\rp{It looks like I favor definition environments and you sometimes don't. I don't have a strong preference here besides being consistent.}

\begin{definition}
For any finite alphabet $\mathcal{A}$, the \textbf{full shift} over $\mathcal{A}$ is the set $\mathcal{A}^{\zz}$, which is viewed as a compact topological space with the (discrete) product topology.
\end{definition}

\begin{definition}
A \textbf{word} over $\mathcal{A}$ is a member $w \in \mathcal{A}^n$ for some $n \in \mathbb{N}$, which is referred to as its \textbf{length} and denoted by $|w|$. We say that a word $v$ is a \textbf{subword} of a word or biinfinite sequence $x$ if there exists $i$ so that $x([i, i+|w|)) = w$. (Here and throughout, all intervals are assumed to be intersected with $\mathbb{Z}$, e.g. $[2,5)$ represents $\{2,3,4\}$. For such an interval $I$, we view an element of $\mathcal{A}^I$ as a word of length $|I|$ by the obvious identification.)
\end{definition}

The set of words has an obvious binary operation of concatenation, and whenever we write expressions like $vw$ or $w^3$ it is with respect to concatenation.

\begin{definition}
The \textbf{left shift}, denoted by $\sigma$, is the self-map of the full shift defined by $(\sigma x)(n) = x(n+1)$ for
$x \in \mathcal{A}^{\mathbb{Z}}$ and $n \in \mathbb{Z}$.
\end{definition}

\begin{definition}
A \textbf{subshift} over $\mathcal{A}$ is a topological dynamical system $(X, \sigma)$ where $X$ is a closed subset of the full shift $\mathcal{A}^{\zz^d}$ (endowed with the subspace (product) topology) which is invariant under $\sigma$.
\end{definition}

Since there is never ambiguity about the dynamics on $X$, in this work we refer to a subshift simply by the space $X$ for ease of notation.

\begin{definition}
A word, one-sided infinite sequence, or bi-infinite sequence $x$ over $\mathcal{A}$ is \textbf{periodic with period $p$} if
$x(n) = x(n+p)$ for all $n \in \mathbb{Z}$ where both $x(n)$ and $x(n+p)$ are defined.
\end{definition}

%Each $\sigma_t$ is a homeomorphism on any $\zz^d$ subshift, and so any $\zz^d$ subshift, when paired with the $\zz^d$-shift action, is a topological dynamical system. An alternate definition for a $\zz^d$ subshift is in terms of disallowed configurations; for any set $\mathcal{F} \subset A^*$, one can define the set $X(\mathcal{F}) := \{x \in A^{\zz^d} \ : \ x|_S \notin \mathcal{F} \ \forall \textrm{ finite } S \subset \zz^d\}$. It is well known that any $X(\mathcal{F})$ is a $\zz^d$ subshift, and all $\zz^d$ subshifts are representable in this way. All subshifts are assumed to be nonempty in this paper.

%\begin{definition}
%A \textbf{$\zz^d$ shift of finite type (SFT)} is a $\zz^d$ subshift equal to $X(\mathcal{F})$ for some finite $\mathcal{F}$. If $\mathcal{F}$ is made up of pairs of adjacent letters, i.e. if $\mathcal{F} \subseteq \bigcup_{i=1}^d A^{\{0,\vec{e_i}\}}$, then $X$ is called a \textbf{nearest neighbor} $\zz^d$ SFT.
%\end{definition}

\begin{definition}
The \textbf{language} of a subshift $X$, denoted by $L(X)$, is the set of all subwords of sequences in $X$. For all $n$, we write
$L_n(X) = L(X) \cap \mathcal{A}^n$ for the set of words of length $n$ in $L(X)$.
\end{definition}

\begin{definition}
A word $w$ is \textbf{right-special} for a subshift $X$ if there exist $a \neq b \in \mathcal{A}$ for which $wa, wb \in L(X)$. Similarly, $w$ is \textbf{left-special} for $X$ if there exist $c \neq d \in \mathcal{A}$ for which $cw, dw \in L(X)$. The sets of $n$-letter right-special and left-special words for $X$ are denoted by $RS_n(X)$ and $LS_n(X)$ respectively.
\end{definition}

\begin{definition}
The \textbf{word complexity sequence} of a subshift $X$ is defined by $c_n(X) := |L_n(X)|$.
\end{definition}

The following lemma is routine, but we include a proof for completeness. %I could just find a reference...

\begin{lemma}\label{compdiff}
For any subshift $X$ and $n \in \mathbb{N}$, $|LS_n(X)|$ and $|RS_n(X)|$ are less than or equal to $c_{n+1}(X) - c_n(X)$.
\end{lemma}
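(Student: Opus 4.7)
The plan is to use a standard fiber-counting argument via the prefix map (and, symmetrically, the suffix map). Consider the map $\phi: L_{n+1}(X) \to L_n(X)$ that sends each $(n+1)$-letter word to its length-$n$ prefix. This is well-defined because any subword of a point in $X$ has its prefix as a subword as well, and surjective because any $u \in L_n(X)$ occurs in some bi-infinite sequence in $X$, so admits at least one right-extension $ua \in L_{n+1}(X)$.

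The key observation is to read off the fiber sizes of $\phi$ in terms of right-specialness. For $u \in L_n(X)$, the fiber $\phi^{-1}(u)$ is in bijection with $\{a \in \mathcal{A} : ua \in L_{n+1}(X)\}$; this set has size at least $1$ in general, and size at least $2$ precisely when $u \in RS_n(X)$. Summing fiber sizes gives
\[
c_{n+1}(X) \;=\; \sum_{u \in L_n(X)} |\phi^{-1}(u)| \;\geq\; \bigl(c_n(X) - |RS_n(X)|\bigr) + 2\,|RS_n(X)| \;=\; c_n(X) + |RS_n(X)|,
\]
which rearranges to the claimed bound $|RS_n(X)| \leq c_{n+1}(X) - c_n(X)$.

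For $|LS_n(X)|$, I would run the identical argument with the suffix map $\psi: L_{n+1}(X) \to L_n(X)$ sending $w$ to its length-$n$ suffix; well-definedness, surjectivity, and the fiber-size dichotomy (size $\geq 2$ exactly for left-special $u$) are all symmetric to the prefix case, and yield $|LS_n(X)| \leq c_{n+1}(X) - c_n(X)$. There is no real obstacle here; the only point worth stating carefully is the surjectivity of $\phi$ and $\psi$, which is where it matters that $L_n(X)$ is defined via subwords of bi-infinite sequences rather than an abstract language.
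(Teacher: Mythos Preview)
Your proof is correct and is essentially the same as the paper's: both consider the prefix (``remove the final letter'') map $L_{n+1}(X) \to L_n(X)$, note surjectivity, and observe that right-special words are exactly those with fiber of size at least $2$, then handle the left-special case symmetrically. Your version just spells out the fiber-counting inequality a bit more explicitly.
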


\begin{proof}
We give only the proof for $|RS_n(X)|$, as the other is trivially similar. Fix any $X$ and $n$, and consider the map
$f: L_{n+1}(X) \rightarrow L_n(X)$ removing the final letter of an $n+1$-letter word. This map is surjective, and it's clear from definition that $w \in L_n(X)$ is right-special iff its $f$-preimage has cardinality greater than $1$. From this it's immediate that
$|RS_n(X)| \leq c_{n+1}(X) - c_n(X)$.
\end{proof}

\begin{definition}
A subshift $X$ is \textbf{minimal} if for all $w \in L(X)$ and $x \in X$, $w$ is a subword of $x$.% there exists no nonempty subshift $Y$ with $Y \subsetneq X$.
\end{definition}

\begin{definition}
A subshift $X$ is \textbf{(topologically) transitive} if there exists $x \in X$ so that $X = \overline{\{\sigma^n x\}_{n \in \mathbb{Z}}}$.% for all words $v, w \in L(X)$, there exists a word $u$ with $vuw \in L(X)$.
\end{definition}

We briefly note that an infinite transitive subshift $X$ cannot have isolated periodic points; if $X$ is transitive, then there exists $x \in X$ for which $X = \overline{\{\sigma^n x\}}$. If $p \in X$ were isolated and periodic, then $p \in \{\sigma^n x\}$, implying that $X = \{\sigma^n p\}$ and that $X$ is finite by periodicity of $p$, a contradiction.

\begin{definition}
A {\bf (topological) factor map} from one subshift $X$ to another subshift $X'$ is a surjective continuous function
$\phi : X \rightarrow X'$ which commutes with the shift action (i.e. $\phi \circ \sigma = \sigma \circ \phi$).
\end{definition}

By the classical Curtis-Hedlund-Lyndon theorem, factor maps on subshifts have a very specific form.

\begin{theorem}\label{CHL}
For any factor map $\phi: X \rightarrow X'$, there exists $N$ and $\Phi: \mathcal{A}_X^{2N+1} \rightarrow \mathcal{A}_{X'}$ so that for all $x \in X$ and $n \in \mathbb{N}$, $(\phi x)(n) = \Phi(x([n-N, n+N]))$.
\end{theorem}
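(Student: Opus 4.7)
The plan is to prove this via a standard compactness argument, leveraging the fact that $X$ (as a closed subset of $\mathcal{A}_X^{\mathbb{Z}}$) is compact, that $\mathcal{A}_{X'}$ is discrete, and that continuous maps into discrete spaces are locally constant.

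First, I would consider the composition $\pi_0 \circ \phi : X \to \mathcal{A}_{X'}$, where $\pi_0 : \mathcal{A}_{X'}^{\mathbb{Z}} \to \mathcal{A}_{X'}$ is the projection onto the $0$-coordinate. Since $\mathcal{A}_{X'}$ is a finite (hence discrete) set, this composition is continuous if and only if it is locally constant. Fix any $x \in X$. Continuity of $\pi_0 \circ \phi$ at $x$, combined with the fact that basic open neighborhoods of $x$ in the product topology are cylinders of the form $C_{N}(x) = \{y \in X : y([-N,N]) = x([-N,N])\}$, means there exists some $N_x \in \mathbb{N}$ such that $(\phi y)(0) = (\phi x)(0)$ whenever $y \in C_{N_x}(x)$.

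Next, I would use compactness: the collection $\{C_{N_x}(x) : x \in X\}$ is an open cover of $X$, so it admits a finite subcover indexed by some $x_1, \ldots, x_k$. Setting $N := \max_{1 \le i \le k} N_{x_i}$, I claim that for all $y, y' \in X$, if $y([-N,N]) = y'([-N,N])$, then $(\phi y)(0) = (\phi y')(0)$. Indeed, $y \in C_{N_{x_i}}(x_i)$ for some $i$, and since $N \ge N_{x_i}$, agreement on $[-N,N]$ forces $y' \in C_{N_{x_i}}(x_i)$ as well, so both map to $(\phi x_i)(0)$. This common value defines a function $\Phi : L_{2N+1}(X) \to \mathcal{A}_{X'}$ by $\Phi(y([-N,N])) := (\phi y)(0)$, which can be extended arbitrarily to all of $\mathcal{A}_X^{2N+1}$.

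Finally, to extend from the $0$-coordinate to arbitrary $n$, I would invoke shift-equivariance: since $\phi \circ \sigma = \sigma \circ \phi$, we have
\[
(\phi x)(n) = (\sigma^n \phi x)(0) = (\phi \sigma^n x)(0) = \Phi((\sigma^n x)([-N,N])) = \Phi(x([n-N, n+N])),
\]
which is the desired formula.

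The argument is entirely routine; the only substantive input is the compactness of $X$, which converts the pointwise existence of radii $N_x$ into a uniform radius $N$. No step presents a real obstacle.
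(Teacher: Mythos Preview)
Your argument is correct and is the standard proof of the Curtis--Hedlund--Lyndon theorem. The paper itself does not supply a proof of Theorem~\ref{CHL}; it simply invokes the result as classical, so there is nothing to compare against beyond noting that your compactness-plus-equivariance argument is exactly the textbook route.
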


\begin{definition}
We say that a factor map $\phi$ has \textrm{range $N$} and \textrm{has inducing block map $\Phi$} if it satisfies the conclusion of Theorem~\ref{CHL}.
\end{definition}

We remark that though every factor $\phi$ has some range $N$ and inducing block map $\Phi$, these need not be unique.

%\begin{definition}
%A {\bf sliding block code} is a function $\phi$ defined on a subshift $X$ induced by an integer $n > 0$ and a function $\Phi$ on $A^{[-n,n]^d}$
%by the rule $(\phi x)(v) = \Phi(x|_{[-n,n]^d + v})$ for all $v \in \mathbb{Z}^d$.
%\end{definition}

%The well-known Curtis-Hedlund-Lyndon theorem states that the topological factor maps between $\mathbb{Z}^d$ subshifts are precisely the sliding block codes. (See \cite{LM} for a proof for $d=1$, which extends easily to the $d > 1$ case.)

\begin{definition}
An {\bf automorphism} of a subshift $X$ is a factor map from $X$ to itself which is bijective.
\end{definition}

If $\phi$ is an automorphism of $X$ with inducing block map $\Phi: \mathcal{A}_X^{2N+1} \rightarrow \mathcal{A}$, then for every word $w \in L_n(X)$ with $n \geq 2N+1$, we can let $\Phi$ act on $w$ as in the definition of $\phi$. Formally, let $\Phi(w)$ be the word of length $n - 2N$ defined by $(\Phi(w))(i) = \Phi(w([i-N, i+N]))$ for $N < i \leq n - N$.

We remark that ranges of automorphisms are additive under composition. Indeed, by definition, if $\phi$ has range $N$ and inducing map $\Phi$ and $\phi'$ has range $N'$ and inducing map $\Phi'$, then $\phi \circ \phi'$ has range $N+N'$ and inducing map $\Phi \circ \Phi'$ (where $\Phi'$ acts on words in $\mathcal{A}_X^{2N+2N'+1}$ as defined above.)

%; then technically $\Phi(w)$ is a member of $\mathcal{A}^{(N, n-N]}$, but this can clearly be viewed as an element of $\mathcal{A}^{[1, n-2N)}$ by shifting. (TOO MUCH DETAIL? DEFINE WORDS ONLY UP TO SHIFTS ABOVE? THIS IS ALWAYS ANNOYING...)}

%ADD IN DESCRIPTION OF APPLYING AN AUTOMORPHISM TO A WORD TO YIELD ANOTHER WORD, e.g. proof of $(3)$ in Theorem 13.

\subsection{Group theory}\label{group1}

We here summarize some basic definitions from group theory. We will not have need of any advanced group theory in this paper, so we do not go into great detail. For more information, see \cite{RobinsonGT}.

While we often make it explicit in the text, throughout we will assume groups to be countable and discrete. %For a group $G$, we let $[G,G]$ denote the commutator subgroup of $G$. \rp{(IS THIS EVER USED???)}

Clearly, for any subshift $X$, the set of automorphisms of $X$ form a group under the operation of composition, and we denote this group by $\aut$. Since $\sigma$ is itself in $\aut$ and all automorphisms commute with $\sigma$ by definition, the subgroup of $\aut$ generated by $\sigma$ is always normal in $\aut$, and so we may refer to $\grp$. We refer to the set (generally not a group) of automorphisms with range $n$ by $\textrm{Aut}_n(X, \sigma)$.

\begin{definition}
For a subset $S$ of a group $G$, we denote by $\langle S \rangle$ the subgroup of $G$ generated by $S$. A group $G$ is said to be \textbf{generated by} $S \subset G$ if $\langle S \rangle = G$. A group $G$ is \textbf{finitely generated} if there exists a finite $S \subset G$ for which $G = \langle S \rangle$.
%\st{A group $G$ is said to be \textbf{generated by} a finite set $S$ if every $g \in G$ can be written as some product $g_{1} \ldots g_{m}$ for $g_i \in S$; in this case we write $G = \langle S \rangle$.}
\end{definition}

\begin{definition}
A group $G$ is called \textbf{locally finite} if every finitely generated subgroup of $G$ is finite.
\end{definition}

Any countable and locally finite group may be written as a countable increasing union of finite subgroups.

\begin{definition}
A group $G$ is called \textbf{periodic} if every element in $G$ has finite order.
\end{definition}

%Given a group $G$, a subset $F \subset G$ and $g \in G$ we let $gF$ denote the set $\{gf \mid f \in F\}$, and for two subsets $A,B \subset G$ we let $A \Delta B$ denote the symmetric difference of $A$ and $B$.

\begin{definition}
A countable %and discrete
group $G$ is \textbf{amenable} if there exists a sequence $F_{i} \subset G$ of finite subsets of $G$ such that, for every $g \in G$, %$g$ is contained in all but finitely many $F_{i}$ and the limit
\[
\lim_{i \to \infty}\frac{|F_{i} \Delta gF_{i}|}{|F_{i}|}=0.
\]
%holds.
\end{definition}

\begin{definition}
A group $G$ is \textbf{nilpotent} if there exists a sequence of subgroups
\[
\{\textrm{id}\} = H_{0} \subset \cdots \subset H_{k-1} \subset H_{k} = G
\]
such that each $H_{i}$ is normal in $G$ and $H_{i+1}/H_{i}$ is contained in the center of $G/H_{i}$ for all $i$. A group $G$ is \textbf{virtually nilpotent} if it contains a finite index nilpotent subgroup.
\end{definition}

As noted in the introduction, there has been significant recent work on restrictions on $\aut$ imposed by the word complexity function of $X$. We mention one such result here which we will need in our proofs.

\begin{theorem}[\cite{CKquad}]\label{CK}
If $X$ is a transitive subshift and $\frac{c_n(X)}{n^2} \rightarrow 0$, then $\grp$ is a periodic group.
\end{theorem}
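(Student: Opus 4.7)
The plan is to argue by contradiction: suppose some $\phi \in \aut$ has infinite order in $\grp$, so $\phi^k \notin \langle \sigma \rangle$ for every $k \geq 1$, and let $N$ denote the range of $\phi$. Under this assumption the map $\mathbb{Z}^2 \to \aut$ defined by $(a,b) \mapsto \phi^a \sigma^b$ is injective, and since $\phi^a \sigma^b$ has range at most $|a|N + |b|$, there are at least $\Omega(R^2/N)$ pairwise distinct automorphisms of $X$ of range at most $R$ for every $R \geq 1$. The goal will then be to produce an opposing upper bound on the number of automorphisms of range at most $R$ that is strictly subquadratic in $R$ under the hypothesis $c_n(X)/n^2 \to 0$.

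For the upper bound, I would use transitivity in an essential way: fix $x_0 \in X$ with dense orbit, so that any automorphism $\psi$ is determined by $\psi(x_0) \in \overline{\{\sigma^n x_0 : n \in \mathbb{Z}\}}$. An automorphism $\psi$ of range $R$ is then specified by (a) a coarse shift-alignment indicating which shift of $x_0$ its image on a long reference window approximates, and (b) the values of its block map $\Psi : L_{2R+1}(X) \to \mathcal{A}_X$ on right-special words, since invertibility forces $\Psi$ to be rigid at non-special positions (at a non-special window the admissible letter to the right is unique, and this propagates to pin down $\Psi$ off of the special words). Lemma~\ref{compdiff} bounds $|RS_n(X)|$ by $c_{n+1}(X) - c_n(X)$, so a telescoping sum over $n$ up to $2R+1$ converts $c_n(X) = o(n^2)$ into a bound of the form $|\textrm{Aut}_R(X,\sigma)| = o(R^2)$. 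Combined with the $\Omega(R^2)$ lower bound, this gives a contradiction as $R \to \infty$ and forces $\phi$ to have finite order in $\grp$.

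The hard part is rigorously establishing the upper bound in the second step. A naive enumeration over all block maps $\Psi : L_{2R+1}(X) \to \mathcal{A}_X$ yields only an exponential count and is useless here; any workable bound must exploit the invertibility of $\psi$ to rigidify $\Psi$ away from special words and then track the branching freedom at special words using Lemma~\ref{compdiff}. Making the rigidification precise, handling both left- and right-special behavior simultaneously, and controlling the finite alignment ambiguity in (a) is the delicate combinatorial core of the argument as carried out by Cyr and Kra in \cite{CKquad}.
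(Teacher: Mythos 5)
First, note that this paper does not prove Theorem~\ref{CK} at all: it is quoted from \cite{CKquad}, so the only fair comparison is with Cyr and Kra's actual argument, which is not the one you sketch. Their proof takes a single $\phi \in \aut$ and studies the $\mathbb{Z}^2$-system generated by $\sigma$ and $\phi$ (via the array $(i,j) \mapsto (\phi^j x)(i)$ for a transitive point $x$), using their machinery of nonexpansive $\mathbb{Z}^2$-subdynamics from the Nivat-conjecture work to conclude that some $\phi^k$ coincides with a power of $\sigma$; it is not a count of automorphisms by range.

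The genuine gap in your proposal is the upper bound. Your lower bound is fine: if $\phi$ has infinite order in $\grp$ and $X$ is infinite, then $(a,b) \mapsto \phi^a\sigma^b$ is injective and gives on the order of $R^2/N$ automorphisms of range at most $R$. But the claimed opposing bound $|\textrm{Aut}_R(X,\sigma)| = o(R^2)$ is nowhere established, and the mechanism you describe cannot deliver it. Knowing that the block map $\Psi$ is rigid away from special words reduces the count to the number of possible \emph{assignments} of images to the special (or, as in this paper, branch) words, and that count is multiplicative: summing $|RS_n(X)| \leq c_{n+1}(X)-c_n(X)$ over $n \leq 2R+1$ bounds the number of special words by $o(R^2)$, but the number of block maps consistent with them is then exponential in that quantity, not additive. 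Indeed the closest rigorous version of your rigidification idea is Theorem~\ref{autbd2}/Corollary~\ref{autbd} of this paper, and the bound it yields is of the form $(c_{1+c_n(X)}(X))^{2|A|(c_{n+1}(X)-c_n(X))}$, which under $c_n(X)=o(n^2)$ is roughly $e^{o(n\log n)}$ along a subsequence --- vastly larger than any polynomial in $R$, which is exactly why this paper feeds such bounds into growth-gap theorems (Shalom--Tao, the Gap Conjecture) rather than into a polynomial counting contradiction. As a sanity check, if a bound like $|\textrm{Aut}_R(X,\sigma)| = o(R^2)$ were available under subquadratic complexity, it would by itself give conclusions far stronger than Theorem~\ref{CK} (essentially trivializing the local finiteness results that this paper only obtains under much more restrictive hypotheses), so it should be treated as implausible without a genuinely new idea. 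Your final paragraph in effect concedes that this step is the unproven core, and deferring it to \cite{CKquad} does not help, since that is not how their proof goes.
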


%I tried moving here, and making the remark about Cyr-Kra instead, it seemed more natural. But this was just an idea.

\begin{remark}
Theorem \ref{CK} is not necessarily true if one drops the transitivity assumption on the subshift, even when the complexity function grows linearly (here by a complexity function growing linearly we mean it is bounded above by some linear function). For example, if $(X_{1},\sigma_{1})$ and $(X_{2},\sigma_{2})$ are two disjoint infinite subshifts whose complexity functions grow linearly and $(Y,\sigma_{Y})$ is the union of $(X_{1},\sigma_{1})$ and $(X_{2},\sigma_{2})$, then the complexity function for $(Y,\sigma_{Y})$ also grows linearly, but $\faktor{\textrm{Aut}(Y,\sigma_{Y})}{\langle \sigma_{Y} \rangle}$ is not a periodic group, since the image of the automorphism $\sigma_{1} \times \textrm{id}$ under the quotient
 map $\textrm{Aut}(Y, \sigma_{Y}) \to \faktor{\textrm{Aut}(Y,\sigma_{Y})}{\langle \sigma_{Y} \rangle}$ is of infinite order.
\end{remark}

\subsection{Geometric group theory}\label{group2}

%\st{One way to study finitely generated groups is via the growth rate of the number of concatenations of $n$ generators as a function of $n$.% (then, since we know $\grp$ is periodic by (CYR-KRA THEOREM), in fact such finitely generated subgroups must be finite.) This area of research is part of \textbf{geometric group theory} (CHECK THIS), and }
We summarize here some basic results from geometric group theory that we will need. For a more detailed introduction to this area, see~\cite{LohGGT}.

\begin{definition}
For any group $G$ generated by a finite set $S$ and any $n \in \mathbb{N}$, $B_n(S)$ denotes the set of `words of length at most $n$ over $S$', i.e.
\[
B_{n}(S) = \{g \in G \mid g = g_{1} \cdots g_{k} \textnormal{ for some } k \leq n \textnormal{ and } g_{i} \in S\}.
\]
\end{definition}

\begin{definition}
A finitely generated group $G$ has \textbf{polynomial growth} if there exists a finite generating set $S$ and constants $C$ and $d$ so that
$|B_n(S)| < Cn^d$ for all $n$.
\end{definition}

It is well-known that all virtually nilpotent groups have polynomial growth. A celebrated theorem of Gromov shows that the converse is also true.

\begin{theorem}[\cite{gromov}]\label{gromov}
If $G$ is a finitely generated group with polynomial growth, then $G$ is virtually nilpotent.
\end{theorem}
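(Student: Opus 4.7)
The plan is to follow Gromov's original 1981 argument via asymptotic scaling limits of the Cayley graph. Fix a finite generating set $S$ and equip $G$ with the word metric $d_S$. I would consider the sequence of pointed rescaled metric spaces $(G, \tfrac{1}{n} d_S, e)$. Polynomial growth implies these spaces are uniformly ``doubling'' in a quantitative sense, so Gromov's compactness theorem yields a subsequential limit $(X, d_\infty, x_0)$ in the pointed Gromov--Hausdorff topology. The limit $X$ inherits good geometric properties: it is a complete, locally compact, geodesic space; it is homogeneous (inherited from the left translation action of $G$ on the approximating spaces); and its Hausdorff dimension is finite, bounded in terms of the degree of polynomial growth.

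With such a space in hand, I would invoke the heavy analytic input of Montgomery--Zippin's solution to Hilbert's fifth problem: the isometry group $\mathrm{Isom}(X)$ is then a Lie group with finitely many connected components. The limiting $G$-action induces a homomorphism $\rho : G \to \mathrm{Isom}(X)$, and using the structure theory of connected Lie groups — plus the observation that $\rho(G)$ must contain enough ``approximate translations'' to have positive-dimensional closure — one extracts a surjective homomorphism from a finite-index subgroup $G_0 \leq G$ onto $\mathbb{Z}$.

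The conclusion then proceeds by induction on the polynomial growth degree $d$. The base case $d = 0$ gives a finite, hence virtually nilpotent, group. For the inductive step, let $K \leq G_0$ be the kernel of the map $G_0 \twoheadrightarrow \mathbb{Z}$; a classical lemma of Milnor and Wolf shows that $K$ is finitely generated with polynomial growth of strictly smaller degree, so by induction $K$ is virtually nilpotent. Since $G_0$ is an extension of $\mathbb{Z}$ by a virtually nilpotent group, it is virtually polycyclic, and a further appeal to Milnor--Wolf's analysis of polycyclic groups of polynomial growth upgrades this to virtual nilpotence of $G_0$, hence of $G$.

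The hard part, by a wide margin, is the Lie-theoretic step producing the surjection $G_0 \twoheadrightarrow \mathbb{Z}$ from the mere existence of an isometric action on $X$: this is where the Montgomery--Zippin machinery is pressed hard and where Gromov's original paper concentrates its bulk. A modern alternative due to Kleiner (later refined by Tao and Shalom--Tao) avoids Hilbert's fifth problem by constructing non-constant Lipschitz harmonic functions on $G$ via a Poincar\'e-type inequality and gradient estimates, then extracting a finite-dimensional space of such harmonics to build the needed homomorphism; this route is comparably delicate but replaces the topological-group analysis with harder analysis on $G$ itself.
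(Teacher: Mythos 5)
The paper does not prove this statement at all: Theorem~\ref{gromov} is Gromov's celebrated polynomial growth theorem, quoted from \cite{gromov} and used as a black box, so there is no internal argument to compare yours against. What you have written is a reasonable high-level roadmap of Gromov's original proof (rescaled Cayley graphs, a pointed Gromov--Hausdorff limit, Montgomery--Zippin to make $\mathrm{Isom}(X)$ a Lie group, extraction of a finite-index subgroup surjecting onto $\mathbb{Z}$, and induction on the growth degree via Milnor--Wolf), together with a correct pointer to the Kleiner/Shalom--Tao harmonic-function alternative. As a proof, however, it is a sketch whose entire weight rests on cited machinery rather than on arguments you supply: the Lie-theoretic step producing $G_0 \twoheadrightarrow \mathbb{Z}$ (which you rightly flag as the hard part) is only gestured at, and the passage from ``the image has positive-dimensional closure'' to a surjection onto $\mathbb{Z}$ in fact needs additional input (Jordan's theorem and the Tits alternative or the solvable case of Milnor--Wolf) that you do not mention. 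One technical inaccuracy worth noting: polynomial growth does \emph{not} give uniform doubling of balls at all scales, only at a suitable subsequence of scales, so the precompactness needed to extract the limit space requires either Gromov's careful choice of scales or the van den Dries--Wilkie ultrafilter formulation; as stated, the appeal to Gromov compactness is not justified. None of this matters for the paper, which (appropriately) cites the theorem rather than reproving it, and the paper's only quantitative need in this direction is met separately by the Shalom--Tao bound (Theorem~\ref{ST}).
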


The following theorem of Shalom and Tao shows that there is an explicit superpolynomial rate below which growth rates must be polynomial.

\begin{theorem}[\cite{ST}, Corollary 1.10]\label{ST}
There exists a constant $c > 0$ so that if $G$ is a group generated by a finite subset $S$, and there exists $N > c^{-1}$ for which $|B_{N}(S)| \leq N^{c (\log \log N)^c}$, then $G$ is virtually nilpotent.
\end{theorem}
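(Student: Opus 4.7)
The plan is to adapt a quantitative version of Gromov's polynomial growth theorem, following in spirit the strategy of Shalom and Tao themselves, which in turn builds on Kleiner's harmonic-function proof of Gromov. The key is to make every step of Kleiner's argument uniform in the growth data, so that the single-scale hypothesis $|B_N(S)| \le N^{c(\log\log N)^c}$ can play the role that genuine polynomial growth plays in Kleiner--Gromov.

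The first step is a finitary version of Kleiner's theorem: under the growth bound, the space of Lipschitz harmonic functions on $\textrm{Cay}(G,S)$ has finite dimension, controlled explicitly by the assumed volume growth. The ingredients are a Poincar\'e inequality on balls, a doubling-type estimate (which the hypothesis forces on scales between roughly $\log N$ and $N$), and Moser-style iteration to bound the dimension of the harmonic function space in terms of volume growth at a single pair of scales. The second step is to use the natural $G$-action on this finite-dimensional space to produce a nontrivial finite-dimensional linear representation $\rho\colon G \to \textrm{GL}_d(\mathbb{R})$; standard results on linear groups (Jordan's theorem, Mal'cev, and the Tits alternative) then let one pass to a finite-index subgroup with a quotient of strictly smaller ``complexity,'' measured by a Hirsch-length-like invariant. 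The third step is induction on this complexity: the kernel of $\rho$ inherits a slightly weakened growth bound, and one reapplies the finitary harmonic-function machinery to it.

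The main obstacle, and where essentially all of the difficulty lies, is the quantitative bookkeeping. Each invocation of the Poincar\'e inequality, the Moser iteration, the dimension estimate for harmonic functions, and the descent to a finite-index subgroup costs constants, and one must verify that the total loss at each inductive step is no worse than a further factor in $\log\log N$, so that the hypothesis propagates through the induction. In particular, because one cannot afford to pass to asymptotic cones (which is a qualitative, non-effective device), one needs a finitary substitute for Gromov's compactness argument; Shalom and Tao use ultralimits together with structure theorems for approximate groups in order to extract a limit object while retaining explicit quantitative control. I would expect essentially all of the real content of the theorem to live in this quantitative bookkeeping step: the qualitative skeleton is the Kleiner--Gromov proof, but the jump from ``polynomial'' growth to growth of the form $N^{c(\log\log N)^c}$ is precisely what makes the Shalom--Tao theorem nontrivial.
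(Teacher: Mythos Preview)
This theorem is not proved in the paper at all: it is quoted verbatim as Corollary~1.10 of Shalom--Tao \cite{ST} and used as a black box. There is therefore no ``paper's own proof'' to compare your proposal against; the correct move, in the context of this paper, is simply to cite \cite{ST}.

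That said, as a sketch of the actual Shalom--Tao argument your outline is partly right and partly misattributed. The broad shape --- a finitary version of Kleiner's harmonic-function proof of Gromov, producing a finite-dimensional representation and then inducting on a Hirsch-type invariant --- is accurate. But your description of the ``finitary substitute for Gromov's compactness'' is off: Shalom and Tao do \emph{not} use ultralimits or structure theorems for approximate groups. (That machinery belongs to the later Breuillard--Green--Tao work on approximate groups, which gives a different and in some respects stronger quantitative Gromov theorem.) Instead, Shalom--Tao proceed via a quantitative version of Shalom's earlier work on reduced cohomology of unitary representations and property $H_{FD}$, combined with Kleiner's explicit bound on the dimension of the space of Lipschitz harmonic functions. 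The quantitative bookkeeping you flag as the heart of the matter is indeed where the work lies, but the tools used to carry it out are representation-theoretic rather than ultraproduct-based.
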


Although Theorem~\ref{ST} is the first result that gives an explicit `gap' in growth rates for finitely generated groups (i.e. there is no finitely generated group with growth greater than polynomial but lower than $N^{c (\log \log n)^c}$), it is conjectured that this gap is much larger. The Gap Conjecture (\cite{grigorgap}) states that if a group has finite generating set $S$ and
$|B_n(S)| = e^{o(\sqrt{n})}$, then in fact $G$ has polynomial growth (and is therefore virtually nilpotent by Gromov's Theorem). The Gap Conjecture is still open, but it is known to hold for some classes of groups (\cite{grigorp}, \cite{grigorgap}, \cite{wilson1}, \cite{wilson2}).

%\rp{(I think this is equivalent to what he says in his paper by using this funny order on growth rates, but we should check.)}
%Ugh. We probably need to say something about growth rates for finitely generated groups; they are usually deemed to be equivalent under linear changes of scale for $n$, i.e. $f(n)$ is the same as $k f(n/k)$. We either probably need general definitions or a version of the Gap Conjecture which is self-contained and correct in this sense. For reference, see top of page 3 of \newline
%https://urldefense.com/v3/__http://www.math.uchicago.edu/*may/VIGRE/VIGRE2009/REUPapers/Lim.pdf__;fg!!NCZxaNi9jForCP_SxBKJCA!C5D7DiZlyI7xL3HotAvElMKeyH2DaWYlRlVNMHOB-qDh-00zmq7-l47gCoQqq6s$ . Proofs of Theorems~\ref{mainthm2b} and \ref{mainthm2c} should probably be checked too.}

\begin{definition}
A finitely generated group $G$ has \textbf{subexponential growth} if there exists a generating set $S$ so that
$\frac{\log |B_n(S)|}{n} \rightarrow 0$.
\end{definition}

It is well-known that finitely generated groups of subexponential growth must be amenable (for instance, see~\cite[Cor. 9.2.4]{LohGGT}).

\section{$\aut$ in the low complexity setting}
In this section, we prove Theorems~\ref{mainthm2}-\ref{mainthm2d}. The main tool for Theorems~\ref{mainthm2}-\ref{mainthm2c} is Theorem~\ref{ST}, combined with the following lemma, which bounds the number of automorphisms of a given range.

\begin{corollary}\label{autbd}
For every infinite transitive subshift $X$ and every $n$,
\[
|\textrm{Aut}_{\lfloor (n-1)/2 \rfloor}(X, \sigma)| \leq (c_{1+c_n(X)}(X))^{2|A| (c_{n+1}(X) - c_n(X))}.
\]
\end{corollary}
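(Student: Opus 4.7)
The plan is to encode each $\phi \in \textrm{Aut}_{\lfloor(n-1)/2\rfloor}(X,\sigma)$ by a bounded amount of combinatorial data, and then count the possible encodings. Setting $N = \lfloor(n-1)/2\rfloor$, we have $2N+1 \leq n$, so $\phi$ is determined by its inducing block map $\Phi$ restricted to $L_n(X)$; equivalently, using transitivity, $\phi$ is determined by $\phi(x_0)$ for any fixed transitive point $x_0 \in X$.

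The structural observation driving the bound is that the ``branching'' of $X$ at level $n$ is concentrated at the special words. In the Rauzy-type graph with vertex set $L_n(X)$ and edge set $L_{n+1}(X)$ (with an edge $u \to v$ whenever $u = u_1 \cdots u_n$, $v = u_2 \cdots u_{n+1}$, and $u_1 \cdots u_{n+1} \in L_{n+1}(X)$), every non-right-special vertex has a unique out-edge and every non-left-special vertex has a unique in-edge. By Lemma~\ref{compdiff}, $|RS_n(X)|$ and $|LS_n(X)|$ are each bounded by $c_{n+1}(X) - c_n(X)$.

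With this in hand, I would associate to $\phi$ the following data. For each right-special word $w \in RS_n(X)$ and each letter $a \in \mathcal{A}$ with $wa \in L_{n+1}(X)$, fix a point $x_{w,a} \in X$ containing $wa$ at a prescribed position, and record the restriction of $\phi(x_{w,a})$ to a suitably placed window of length $1+c_n(X)$, viewed as an element of $L_{1+c_n(X)}(X)$; do the same for each left-special word and its allowed left-extensions. Since each special word has at most $|\mathcal{A}|$ branchings, the total number of data entries is bounded by $2|\mathcal{A}|(c_{n+1}(X) - c_n(X))$, and each entry lies in a set of size $c_{1+c_n(X)}(X)$; this yields the claimed bound.

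The main obstacle is to verify that this finite data in fact determines $\phi$. From each recorded window one reads off $\Phi$ on every length-$n$ subword appearing in the corresponding stretch of $x_{w,a}$; the delicate part is to propagate these values along the deterministic non-branching paths of the Rauzy graph and, by ranging over all recorded branchings, to force $\Phi(w)$ for every $w \in L_n(X)$. Transitivity of $X$ is used both to guarantee that every length-$n$ word lies on such a path reachable from a recorded branching, and to pin down the window length $1+c_n(X)$ as large enough for the propagation argument to close up.
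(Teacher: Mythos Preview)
Your approach is essentially the same as the paper's. The paper formalizes your ``windows at branchings'' as \emph{$n$-branch words}: maximal words in $L(X)$ that begin with a word in $RS_n(X)$ (resp.\ end with one in $LS_n(X)$), contain no other special word of that type, and contain no repeated $n$-letter subword. The no-repetition clause is exactly the pigeonhole argument you allude to for the window length: such a word has at most $c_n(X)$ many $n$-subwords, hence length $< n + c_n(X)$, so its image under the block map has length $\leq 1 + c_n(X)$. The count of branch words is at most $|A|\cdot|RS_n(X)| + |A|\cdot|LS_n(X)| \leq 2|A|(c_{n+1}(X)-c_n(X))$, matching your count of recorded entries.

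The one place your sketch is genuinely incomplete is the coverage claim (``every length-$n$ word lies on such a path reachable from a recorded branching''). The paper handles this by proving that any $w \in L_n(X)$ which is \emph{not} a subword of an $n$-branch word must be a subword of an isolated periodic point: if following $w$ to the right never meets a right-special word before an $n$-subword repeats, and likewise to the left for left-special words, then the resulting forcing shows $w$ sits in a periodic point whose cylinder contains only that orbit. Transitivity enters precisely here, via the observation that an infinite transitive subshift has no isolated periodic points, so every $w \in L_n(X)$ is in fact covered. Your invocation of transitivity is pointing at this, but the isolated-periodic-point dichotomy is the mechanism you would need to spell out.
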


This is actually a corollary of the following slightly more general theorem, which we will need for Theorem~\ref{mainthm2d}.

For any subshift $X$, define $\textrm{Aut}^{(FIP)}(X, \sigma) \subset \aut$ to be the subgroup of automorphisms of $X$ which fix all isolated periodic points in $X$. (If $X$ has no isolated periodic points, then $\textrm{Aut}^{(FIP)}(X, \sigma) := \aut$.)
We denote the set of such automorphisms which have range $n$ by $\textrm{Aut}^{(FIP)}_n(X, \sigma)$.
\begin{theorem}\label{autbd2}
Let $X$ be a subshift. Then for every $n$,
\[
|\textrm{Aut}^{(FIP)}_{\lfloor (n-1)/2 \rfloor}(X, \sigma)| \leq (c_{1+c_n(X)}(X))^{2|A| (c_{n+1}(X) - c_n(X))}.
\]
\end{theorem}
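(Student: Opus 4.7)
Plan: I would take an encoding approach. Setting $N = \lfloor (n-1)/2 \rfloor$ (so $2N+1 \leq n$), I will associate to each $\phi \in \textrm{Aut}^{(FIP)}_N(X,\sigma)$ a tuple of words from $L_{1+c_n(X)}(X)$, with one entry for each pair consisting of a special length-$n$ word together with an extending letter. Counting such tuples via Lemma~\ref{compdiff} will yield the claimed bound; the crux is establishing injectivity of the assignment.

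Concretely, I would fix, once and for all, for each pair $(w,a)$ with $w \in RS_n(X)$ and $wa \in L_{n+1}(X)$, a canonical right-extension $u^+_{w,a} \in L_{c_n(X) + 2N + 1}(X)$ whose first $n+1$ letters are $wa$, and symmetrically a canonical left-extension $u^-_{w,a}$ for each pair with $w \in LS_n(X)$ and $aw \in L_{n+1}(X)$. For $\phi$ with block map $\Phi$ of range $N$, each $\Phi(u^{\pm}_{w,a})$ has length exactly $c_n(X) + 1$, giving a well-defined map $\phi \mapsto (\Phi(u^{\pm}_{w,a}))_{(w,a)}$ from $\textrm{Aut}^{(FIP)}_N(X,\sigma)$ into $L_{1+c_n(X)}(X)^{P}$, where $P$ is the number of special-extension pairs.

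Bounding $P$ is immediate from the earlier lemma: Lemma~\ref{compdiff} gives $|RS_n(X)|, |LS_n(X)| \leq c_{n+1}(X) - c_n(X)$, and each special word has at most $|\mathcal{A}|$ one-letter extensions on the relevant side, so $P \leq 2|\mathcal{A}|(c_{n+1}(X) - c_n(X))$. This yields the stated bound $(c_{1+c_n(X)}(X))^{2|\mathcal{A}|(c_{n+1}(X) - c_n(X))}$ on $|\textrm{Aut}^{(FIP)}_N(X,\sigma)|$ as soon as injectivity is verified.

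The main obstacle is injectivity: showing that if $\phi, \phi' \in \textrm{Aut}^{(FIP)}_N(X,\sigma)$ induce the same tuple, then $\phi = \phi'$. Since $\phi$ is determined by the restriction of its block map to $L_{2N+1}(X)$, one must show that $\Phi(v)$ can be recovered from the tuple for every $v \in L_{2N+1}(X)$. For every $v$ appearing as a length-$(2N+1)$ subword of some $u^{\pm}_{w,a}$ this is direct. The hard case is when $v$ does not appear in any canonical extension; the plan is to prove, via a pigeonhole argument on the de Bruijn graph of length-$n$ words (where the canonical extensions' length $c_n(X) + 2N + 1$ forces each traced path to revisit a vertex and hence contain a cycle), that such a $v$ can appear in $X$ only within isolated periodic orbits. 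On those orbits the FIP hypothesis forces $\phi$ to act as the identity, so $\Phi(v) = v[N]$ independently of the tuple, completing the reconstruction of $\Phi$. Executing this dichotomy — generic words recovered from canonical extensions, exceptional words recovered from FIP — is the combinatorial heart of the argument, and is the step I expect to require the most care.
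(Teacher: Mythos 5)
Your encoding and counting scaffolding is sound, and it runs essentially parallel to the paper's own argument: where you fix canonical extensions $u^{\pm}_{w,a}$ of length $c_n(X)+2N+1$ (with $N=\lfloor (n-1)/2\rfloor$), the paper uses what it calls $n$-branch words --- maximal words beginning with a word of $RS_n(X)$, containing no other length-$n$ right-special word and no repeated length-$n$ subword, together with the left-handed analogue. Because the continuation of a word is forced as long as its trailing (resp.\ leading) length-$n$ window is not right- (resp.\ left-) special, each branch word is automatically a prefix of your $u^{+}_{w,a}$ (resp.\ a suffix of $u^{-}_{w,a}$): branch words have length less than $n+c_n(X)$, while your extensions have length $c_n(X)+2N+1\ge c_n(X)+n-1$. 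Your bounds $P\le 2|\mathcal{A}|(c_{n+1}(X)-c_n(X))$ and $|\Phi(u^{\pm}_{w,a})|=c_n(X)+1$ are correct, so the entire theorem reduces, exactly as you say, to the dichotomy: every $v\in L_{2N+1}(X)$ not occurring in any of the chosen words occurs only inside isolated periodic points.

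That dichotomy is the genuine gap: you name it as the combinatorial heart but do not prove it, and the route you sketch (pigeonhole on the de Bruijn graph of length-$n$ words, so that each traced path contains a cycle) does not suffice. The pigeonhole only says that a path longer than $c_n(X)$ revisits a vertex; it does not explain why a word missed by your finitely many specific extensions cannot occur in an aperiodic point, or in a periodic point that is not isolated. The needed argument (the paper's fact (3)) is a case analysis: extend $v$ to a length-$n$ word $w$, take an occurrence $x([0,n))=w$, and consider the first position $m$ at which some length-$n$ window recurs to the right. If the recurring window is not $w$ itself returning at the start, one extracts a left-special window and, using the forced backward continuation, embeds $w$ into a left branch word, hence into a canonical extension; the same happens if any right-special window occurs in $x([0,m))$. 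Only in the remaining case --- $x([0,m))$ begins and ends with $w$ and contains no left- or right-special length-$n$ window --- does one get two-sided forcing, which shows every point agreeing with $x$ on $[0,m)$ is the same periodic point, so $x$ is an isolated periodic point. One also has to observe that the conclusion is independent of the arbitrary choices made in fixing $u^{\pm}_{w,a}$, which is again exactly the forced-continuation remark above. Without carrying out this analysis, injectivity of your assignment --- and with it the stated bound --- is not established.
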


\begin{proof}

For any subshift $X$ and any $n$, define an \textbf{$n$-right branch word} to be a word in $L(X)$ beginning with a word in $RS_n(X)$, containing no other word in $RS_n(X)$, containing no repeated $n$-letter subwords, and which is maximal with respect to subword inclusion subject to these constraints. Similarly, define an \textbf{$n$-left branch word} to be a word in $L(X)$ ending with a word in $LS_n(X)$, containing no other word in
$LS_n(X)$, with no repeated $n$-letter subwords, and which is maximal with respect to subword inclusion subject to these constraints. An $n$-branch word is any word that is either an $n$-left or $n$-right branch word.

The proof relies on the following three facts about $n$-branch words.

\begin{enumerate}
\item For every $n$, the number of $n$-branch words is less than or equal to the quantity $2|A| (c_{n+1}(X) - c_n(X))$. %\leq |A| (\log \log n)^{\epsilon}$

\item For every $n$, each $n$-branch word has length less than $n + c_n(X)$. %< n + n (\log \log n)^{\epsilon}$.

\item Suppose $\phi_1$ and $\phi_2$ are automorphisms with range $\lfloor (n-1)/2 \rfloor$ induced by block codes $\Phi_{1}, \Phi_{2}$ respectively such that $\phi_{1}, \phi_{2}$ fix all isolated periodic points, and $\Phi_1(w) = \Phi_2(w)$ for all $n$-branch words $w$. Then $\phi_1 = \phi_2$. %\rp{Uggggggghhhhh I guess the range needs to be $(n-1)/2$, or else we could have a branch word of length exactly $n$ where $\Phi$ is undefined. Any clever way out of this???}
\end{enumerate}

\noindent
\textbf{Proof of (1):} Each $n$-right branch word $w$ is determined completely by its initial word in $RS_n(X)$ and the following letter; then, since $w$ contains no other words in $RS_n(X)$, each $n$-letter subword determines the next letter, meaning that all of $w$ is forced. There are obviously at most $|A| |RS_n(X)|$ choices for this initial word and following letter, which is less than or equal to $|A| (c_{n+1}(X) - c_n(X))$ by Lemma~\ref{compdiff}. A similar bound holds for $n$-left branch words, implying (1).\\

\noindent
\textbf{Proof of (2):} Every $n$-branch word contains no repeated $n$-letter subwords, and so contains at most $c_n(X)$ $n$-letter subwords. This clearly implies that such a word has length less than $n + c_n(X)$.\\

\noindent
\textbf{Proof of (3):} We claim that every $w \in L_n(X)$ which is not the subword of any $n$-branch word must be a subword of an isolated periodic point of $X$. To see this, assume that $w \in L_n(X)$ is not a subword of any $n$-branch word.

Choose any $x \in X$ with $x([0, n)) = w$. Define $m$ to be the minimal integer greater than $n$ so that there exists $n \leq i < m$ for which $x([i-n, i)) = x([m-n, m))$, i.e. the first place, when moving to the right from $x(0)$, where an $n$-letter word appears for the second time. Choose such an i, and suppose $i > n$. Then $x(i-n-1) \neq x(m-n-1)$, since otherwise $x([i-n-1, i-1)) = x([m-n-1, m-1))$, violating minimality of $m$. This would imply that $x([i-n, i))$ is a left-special word, and by minimality of $m$, that $x([0,i))$ is a word ending with a word in $LS_n(X)$ with no repeated $n$-letter subwords. We could then extend $x([0,i))$ to the left to create a maximal such word $x([j,i))$, which is an $n$-left branch word by definition. This $n$-left branch word would contain $w = x([0,n))$ as a subword, a contradiction.

Therefore $i = n$, i.e., $x([0, m))$ begins and ends with $w$. If $x([0, m))$ contained any words in $LS_n(X)$, then just as before we could construct an $n$-left branch word containing $w$, a contradiction. We have then shown that $x([0,m))$ begins and ends with $w$ and contains no subwords in $LS_n(X)$. Therefore, the right-most occurrence of $w$ in $x([0,m))$ %\ssred{**isn't the right-most occurrence of $w$ here at the end of $x([0,m])$? Oh, it is, but what you wrote is fine!}
forces letters to the left until the left-most occurrence of $w$, and this continues indefinitely. In other words, every $y \in X$ with $y([0,m)) = x([0,m))$ in fact has $y((-\infty, m))$ periodic with period $m-n$.

A similar argument shows that $x([0,m))$ cannot contain any words in $RS_n(X)$ either; if $j \geq 0$ were minimal so that $x([j, m))$ begins with a word in $RS_n(X)$, then $x([j,m))$ could be extended to the right to create an $n$-right branch word containing $x([m-n, m)) = w$, a contradiction. So $x([0,m))$ contains no words in $RS_n(X)$, meaning that the left-most occurrence of $w$ forces letters to the right until the right-most occurrence. It follows that if $y \in X$ satisfies $y([0,m)) = x([0,m))$, then $y([0, \infty))$ is periodic with period $m-n$.

Altogether, what we have shown is that every $y \in X$ with $y([0, m)) = x([0, m))$ is a periodic point with period $m-n$ coming from biinfinite repetition of $x([0, m-n))$. Therefore, $x$ is an isolated periodic point, verifying the claim that every $w \in L_n(X)$ which is not the subword of any $n$-branch word must be a subword of an isolated periodic point.

Now, choose any $\phi_1,\phi_2 \in \textrm{Aut}^{(FIP)}(X, \sigma)$ with range $\lfloor (n-1)/2 \rfloor$ and inducing block maps $\Phi_1$ and $\Phi_2$, and assume that $\Phi_{1}(v) = \Phi_{2}(v)$ for all $n$-branch words $v$.
Define $n' = 2 \lfloor (n-1)/2 \rfloor + 1$, so that $\Phi_1$ and $\Phi_2$ have domain $\mathcal{A}^{n'}$; clearly $n' \leq n$. Since $\phi_1$ and $\phi_2$ fix isolated periodic points, for all $n'$-letter subwords $u$ of such points, $\Phi_1(u) = \Phi_2(u)$. Choose any $w \in L_{n'}(X)$ which is not a subword of such a point; since $n' \leq n$, by the above it is a subword of an $n$-branch word $v$. %Now, take an arbitrary word $v \in RS_n(X)$; by transitivity (CAN WE REMOVE THIS ASSUMPTION?), there exists a word of the form $v t u s v \in L(X)$ for some words $t, s$. Then, we pass to a subword $w$ of minimal length containing $u$ and beginning and ending with words in $RS_n(X)$; by definition, $w$ is an $n$-branch word.
Now, since $\Phi_{1}(v) = \Phi_{2}(v)$, and $w$ is a subword of $v$, $\Phi_1(w) = \Phi_2(w)$. We now know that $\Phi_1$ and $\Phi_2$ agree on all words in $L_{n'}(X)$, so $\Phi_1 = \Phi_2$, meaning that $\phi_1 = \phi_2$.\\

%We now use Claims 1-3 to bound $|\textrm{Aut}_{n/2}(X)|$ for any $n$ satisfying (\ref{simult}). Choose any such $n$.

By (3), the number of automorphisms of range $\lfloor (n-1)/2 \rfloor$ which fix isolated periodic points is bounded from above by the number of possible choices for $\Phi(w)$ for all $n$-branch words $w$. Each $\Phi(w)$ is determined by the length of $w$ (which is independent of $\phi$) and some word of length $|w| - 2\lfloor (n-1)/2 \rfloor \leq |w| - n + 2$. By (2), the number of such words is less than or equal to $c_{1+c_n(X)}(X)$. By (1), the number of
$w$ is bounded by $2|A| (c_{n+1}(X) - c_n(X))$, completing the proof.

\end{proof}

Corollary~\ref{autbd} now follows immediately since an infinite transitive subshift $X$ has no isolated points.

We will also need the following technical lemma, which will allow us to use low complexity along a subsequence to prove existence of a (possibly sparser) subsequence where both complexity and first difference of complexity are small.
%\ssred{**Note: You made a note that some $\leq$ can be made into $<$ in the following Lemma, and this addresses the later comment about display (2) on the next page}
\begin{lemma}\label{diff}
For any sequences of positive reals $f(n)$ and $g(n)$ where
\[
\liminf f(n) - \sum_{i=1}^n g(i) = -\infty,
\]
there exist infinitely many values of $n$ where $f(n) < \sum_{i=1}^n g(i)$ and $f(n) - f(n-1) < g(n)$.
\end{lemma}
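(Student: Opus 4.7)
The plan is to reformulate both conditions in terms of a single auxiliary sequence. Define
\[
h(n) := f(n) - \sum_{i=1}^{n} g(i).
\]
Then the first desired inequality $f(n) < \sum_{i=1}^{n} g(i)$ is equivalent to $h(n) < 0$. Moreover, since
\[
h(n) - h(n-1) = f(n) - f(n-1) - g(n),
\]
the second desired inequality $f(n) - f(n-1) < g(n)$ is equivalent to $h(n) < h(n-1)$. So the lemma reduces to the purely real-analytic claim: if $\liminf_{n \to \infty} h(n) = -\infty$, then infinitely many $n$ satisfy both $h(n) < 0$ and $h(n) < h(n-1)$.

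For this reduced claim, the natural strategy is a record-minimum argument. For each positive integer $k$, let $n_k$ be the smallest index $n \geq 2$ with $h(n) < -k$; such an index exists because $\liminf h(n) = -\infty$, and clearly $n_k \to \infty$. By minimality of $n_k$, we must have $h(n_k - 1) \geq -k$, hence
\[
h(n_k) < -k \leq h(n_k - 1),
\]
which gives both $h(n_k) < 0$ and $h(n_k) < h(n_k - 1)$ simultaneously. Since $n_k \to \infty$, this produces infinitely many values of $n$ satisfying both inequalities, completing the proof after translating back via the definition of $h$.

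There is essentially no obstacle here; the content of the lemma is exactly the observation that a new record low of $h$ is automatically smaller than the preceding value, so record-low indices serve both conditions at once. The only minor bookkeeping concerns the domain of $n$ (we need $n \geq 2$ for $f(n-1)$ to make sense, but this is harmless since $n_k \to \infty$).
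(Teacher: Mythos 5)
Your proof is correct, and it takes a genuinely different route from the paper's. You reformulate everything through the single sequence $h(n) = f(n) - \sum_{i=1}^n g(i)$ and observe that both desired inequalities become $h(n)<0$ and $h(n)<h(n-1)$, which you then obtain simultaneously at first-passage (record-low) indices $n_k$ where $h$ first drops below $-k$. The paper instead argues in two stages: it first shows by a telescoping contradiction that there are infinitely many $n$ with $f(n)-f(n-1)<g(n)$, and then splits into cases according to whether $f(n)<\sum_{i=1}^n g(i)$ holds for all large $n$ (in which case it combines the two facts) or the inequality ``switches sign'' infinitely often (in which case each switch from $\geq$ to $<$ forces $f(n)-f(n-1)<g(n)$). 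Your record-minimum argument is more streamlined, since it produces both conditions at once with no case analysis; the paper's version has the mild advantage of not needing the auxiliary sequence, but the two are of comparable elementarity. One small point of bookkeeping in your write-up: when $n_k=2$, the minimality of $n_k$ over $n\geq 2$ does not by itself give $h(n_k-1)=h(1)\geq -k$; however, since $h(1)$ is a fixed real, this inequality holds automatically for all sufficiently large $k$, so the argument goes through unchanged (alternatively, restrict attention to $k$ large enough that $-k<\min\{h(1),0\}$).
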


\begin{proof}
We first note that the hypothesis immediately implies that there exist infinitely many $n$ where $f(n) - f(n-1) < g(n)$; if not, then there would be $N$ where $f(n) - f(n-1) \geq g(n)$ for all $n > N$, meaning that
\[
f(n) = f(N) + \sum_{i=N+1}^{n} f(i) - f(i-1) \geq
(f(N) - \sum_{i=1}^{N} g(i)) + \sum_{i=1}^n g(i) \textrm{ for all } n > N,
\]
a contradiction to the assumption.

We now break into two cases. First, suppose that there exists $N$ so that $f(n) < \sum_{i=1}^n g(i)$ for $n > N$. Combining with the previous paragraph then yields the conclusion of the theorem.

Now, suppose that there exist infinitely many $n$ where $f(n) \geq \sum_{i = 1}^{n} g(i)$. The hypothesis of the theorem implies that there are also infinitely many $n$ where $f(n) < \sum_{i=1}^n g(i)$. This implies that there are infinitely many $n$ where
$f(n-1) \geq \sum_{i=1}^{n-1} g(i)$ and $f(n) < \sum_{i=1}^{n} g(i)$ (i.e. the sign of the inequality `switches infinitely many times').
But for any such $n$,
\[
f(n) - f(n-1) < \sum_{i=1}^{n} g(i) - \sum_{i=1}^{n-1} g(i) = g(n),
\]
completing the proof.

\end{proof}

We are now prepared to prove Theorems~\ref{mainthm2}-\ref{mainthm2d}. We briefly note that if $X$ is finite, then $\aut$ and $\grp$ are finite, and the conclusions of these theorems trivially hold. We therefore treat only the case where $X$ is infinite in all proofs.

\begin{proof}[Proof of Theorem~\ref{mainthm2}]

Choose any infinite transitive subshift $X$ with \newline $\liminf \frac{\log (c_n(X)/n)}{\log \log \log n} = 0$, and take $\epsilon > 0$ where $5\epsilon$ is less than the constant $c$ from Theorem~\ref{ST}. We first claim that
\begin{equation}\label{neginf}
\liminf c_n(X) - \sum_{i = 2}^n \lfloor (\log \log i)^{\epsilon} \rfloor = -\infty.
\end{equation}
To see this, by assumption, there are infinitely many $n$ where $c_n(X) < n (\log \log n)^{\epsilon/2}$, which is less than
$(n/3) (\log \log (n/2))^{\epsilon}$ for large enough $n$. Also, $\sum_{i = 2}^n \lfloor (\log \log i)^{\epsilon} \rfloor \geq
\sum_{i = \lceil n/2 \rceil}^n \lfloor (\log \log i)^{\epsilon} \rfloor \geq
(n/2) \lfloor (\log \log (n/2))^{\epsilon} \rfloor$, and so for infinitely many $n$, $c_n(X) - \sum_{i = 2}^n \lfloor (\log \log i)^{\epsilon} \rfloor$ is less than
\[
(n/3) (\log \log (n/2))^{\epsilon} - (n/2) \lfloor (\log \log (n/2))^{\epsilon} \rfloor \\ \leq n/2 - (n/6) (\log \log (n/2))^{\epsilon},
\]
which approaches $-\infty$, verifying (\ref{neginf}).
We now apply Lemma~\ref{diff}, and see that there exist infinitely many $n$ for which %\ssred{**note: I think the first $<$ should be $\leq$, and the last $\leq$ should be $<$. You noted these can all be changed to $<$'s once Lemma 14 gets all $<$'s.}
\begin{equation}\label{simult}
c_n(X) < \sum_{i = 2}^n \lfloor (\log \log i)^{\epsilon} \rfloor <
n \lfloor (\log \log n)^{\epsilon} \rfloor \textrm{ and } c_n(X) - c_{n-1}(X) < \lfloor (\log \log n)^{\epsilon} \rfloor.
\end{equation}

Now, by Corollary~\ref{autbd}, for any $n$ satisfying (\ref{simult}), $|\textrm{Aut}_{\lfloor (n-1)/2 \rfloor}(X, \sigma)|$ is bounded from above by
\[
(c_{1+c_n(X)}(X))^{2|A| (c_{n+1}(X) - c_n(X))} \leq (c_{n\lfloor (\log \log n)^{\epsilon}\rfloor}(X))^{2|A| (\log \log n)^{\epsilon}}.
\]
By subadditivity,
\[
c_{n\lfloor (\log \log n)^{\epsilon}\rfloor}(X) \leq c_n(X)^{\lfloor (\log \log n)^{\epsilon} \rfloor} < \left((n\log \log n)^{\epsilon}\right)^{(\log \log n)^{\epsilon}}
\]
\[
\leq \left(n^{2}\right)^{(\log \log n)^{\epsilon}}
= n^{2 (\log \log n)^{\epsilon}}.
\]
Therefore,
\begin{equation}\label{autbdeqn}
|\textrm{Aut}_{\lfloor (n-1)/2 \rfloor}(X, \sigma)| \leq \left(n^{2 (\log \log n)^{\epsilon}}\right)^{2|A| (\log \log n)^{\epsilon}} = n^{4|A| (\log \log n)^{2\epsilon}}.
\end{equation}
holds for any of the (infinitely many) $n$ satisfying (\ref{simult}).

Now, choose any finite subset $S$ of $\grp$ and let $S^{\prime}$ be a finite set in $\aut$ whose image under the quotient map $\aut \to \grp$ is the set $S$. Suppose that $k$ is large enough that all automorphisms in $S'$ and their inverses have range $k$. Then by additivity of ranges of automorphisms under composition, any composition of $m$ elements of $S'$ is an automorphism of range $km$.

%\ssred{Note: I changed many $n/k$'s to $n/2k$'s, can you double check please.}
Then for any $n$ for which (\ref{simult}) holds, (\ref{autbdeqn}) implies that the number $B_{\lfloor (n-1)/2 \rfloor/k}(S')$ of compositions of at most $\lfloor (n-1)/2 \rfloor/k$ elements of $S'$ satisfies
\[
|B_{\lfloor (n-1)/2 \rfloor/k}(S')| \leq n^{4|A| (\log \log n)^{2\epsilon}}.
\]
Since (\ref{simult}) holds for infinitely many $n$, we may choose such an $n$ greater than $e^{e^3}$, $9k^2$, $e^{2e^{(8|A|/5\epsilon)^{\epsilon^{-1}}}}$, and $c^{-2}$ (here $c$ is as in Theorem~\ref{ST}). Then $\lfloor (n-1)/2 \rfloor/k > n/3k > \sqrt{n}$, and $\log \log \sqrt{n} = \log \log n - \log 2 > \sqrt{\log \log n}$ since $\log \log n > 3$, so
\[
|B_{\sqrt{n}}(S')| \leq n^{4|A| (\log \log n)^{2\epsilon}} \leq \sqrt{n}^{8|A| (\log \log \sqrt{n})^{4\epsilon}}.
\]
Since $n > e^{2e^{(8|A|/5\epsilon)^{\epsilon^{-1}}}}$, then $8|A| < 5 \epsilon (\log \log \sqrt{n})^{\epsilon}$, and so
\[
|B_{\sqrt{n}}(S')| < \sqrt{n}^{5\epsilon (\log \log \sqrt{n})^{5\epsilon}}.
\]
Finally, since $\sqrt{n} > c^{-1}$, by Theorem~\ref{ST},
$\langle S' \rangle$ is virtually nilpotent.

Therefore, $\langle S \rangle = \faktor{\langle S' \rangle}{\langle \sigma \rangle}$ is a quotient group of a virtually nilpotent group and so itself virtually nilpotent. Let $H$ be a finite index nilpotent subgroup of $\langle S \rangle$; it is finitely generated as it is a finite index subgroup of a finitely generated group. By Theorem~\ref{CK}, $\grp$ is periodic, so $H$ is also periodic. Altogether we have that $H$ is finitely generated, periodic, and nilpotent, and therefore finite, implying that $\langle S \rangle$ is finite as well. Since $S$ was an arbitrary finite subset of $\grp$, we have shown that $\grp$ is locally finite.

\end{proof}

\begin{proof}[Proof of Theorem~\ref{mainthm2b}]

Assume that the Gap Conjecture holds. We change almost nothing about the proof of Theorem~\ref{mainthm2}, but must simply change our estimates for the usage of Lemma~\ref{diff}.

Choose any infinite transitive subshift $X$ where $\liminf \frac{c_n(X)}{n^{1.25} (\log n)^{-0.5}} = 0$. We first claim that for any $\epsilon > 0$,
\begin{equation}\label{neginf2}
\liminf c_n(X) - \sum_{i = 2}^n \lfloor \epsilon i^{0.25} (\log i)^{-0.5} \rfloor = -\infty.
\end{equation}
To see this, note that by assumption, there are infinitely many $n$ where $c_n(X) < \frac{\epsilon }{3} n^{1.25} (\log n)^{-0.5}$. %, which is less than $\frac{\epsilon }{3} n^{1.25} (\log n)^{-0.5}$.
Also, $\sum_{i = 2}^n \lfloor \epsilon i^{0.25} (\log i)^{-0.5} \rfloor \geq
\sum_{i = \lceil n/2 \rceil}^n \lfloor \epsilon i^{0.25} (\log i)^{-0.5} \rfloor \geq \newline
(n/2) \lfloor \epsilon (n/2)^{0.25} (\log n)^{-0.5} \rfloor \geq
\frac{\epsilon }{2^{1.25}} n^{1.25} (\log n)^{-0.5} - \frac{n}{2}$. So, for infinitely many $n$,
\[
c_n(X) - \sum_{i = 2}^n \lfloor \epsilon i^{0.25} (\log i)^{-0.5} \rfloor <
\frac{\epsilon }{3} n^{1.25} (\log (n/2))^{-0.5} - \frac{\epsilon }{2^{1.25}} n^{1.25} (\log (n/2))^{-0.5} + \frac{n}{2}.
\]
Since this last term approaches $-\infty$, we have verified (\ref{neginf2}).
We now apply Lemma~\ref{diff}, and see that there exist infinitely many $n$ for which
\begin{multline}\label{simult2}
c_n(X) < \sum_{i = 2}^n \lfloor \epsilon i^{0.25} (\log i)^{-0.5} \rfloor \leq n \lfloor \epsilon n^{0.25} (\log n)^{-0.5} \rfloor \\ \textrm{ and }
c_n(X) - c_{n-1}(X) < \lfloor \epsilon n^{0.25} (\log n)^{-0.5} \rfloor.
\end{multline}

Now, by Corollary~\ref{autbd}, if $n$ satisfies (\ref{simult2}), $|\textrm{Aut}_{\lfloor (n-1)/2 \rfloor}(X, \sigma)|$ is bounded from above by
\[
(c_{1+c_n(X)}(X))^{2|A| (c_{n+1}(X) - c_n(X))} \leq
(c_{n \lfloor \epsilon n^{0.25} (\log n)^{-0.5} \rfloor}(X))^{2|A| \epsilon n^{0.25} (\log n)^{-0.5}}.
\]
By subadditivity,
\begin{multline*}
c_{n \lfloor \epsilon n^{0.25} (\log n)^{-0.5} \rfloor}(X) \leq c_n(X)^{\lfloor \epsilon n^{0.25} (\log n)^{-0.5} \rfloor}\\
\leq (n^{1.25})^{\epsilon n^{0.25} (\log n)^{-0.5}} < n^{2\epsilon n^{0.25} (\log n)^{-0.5}}.
\end{multline*}
Therefore, $|\textrm{Aut}_{\lfloor (n-1)/2 \rfloor}(X, \sigma)|$ is bounded from above by
\begin{equation*}
\left(n^{2\epsilon n^{0.25} (\log n)^{-0.5}}\right)^{2|A|\epsilon  n^{0.25} (\log n)^{-0.5}} =
n^{4|A| \epsilon^2 n^{0.5} (\log n)^{-1}} = e^{4|A| \epsilon^2 \sqrt{n}}.
\end{equation*}

Now, exactly as in the end of Theorem~\ref{mainthm2}, any finitely generated subgroup $H$ of $\grp$ has growth less than $e^{4|A| \epsilon^2 \sqrt{n}}$. Since $\epsilon > 0$ was arbitrary, by the Gap Conjecture $H$ must be virtually nilpotent. Exactly as in the proof of Theorem~\ref{mainthm2}, this implies that $\grp$ is locally finite.

\end{proof}

\begin{proof}[Proof of Theorem~\ref{mainthm2c}]

Assume that the Gap Conjecture holds, and choose any infinite transitive subshift $X$ where $\frac{c_n(X)}{n^{1.5} (\log n)^{-1}} \rightarrow 0$. We first claim that for any $\epsilon > 0$,
\begin{equation}\label{neginf3}
\liminf c_n(X) - \sum_{i = 2}^n \lfloor \epsilon i^{0.5} (\log i)^{-1}\rfloor = -\infty.
\end{equation}
Again, by assumption, there are infinitely many $n$ where $c_n(X) < \frac{\epsilon}{3} n^{1.5} (\log n)^{-1}$. Also, $\sum_{i = 2}^n \lfloor \epsilon i^{0.5} (\log i)^{-1} \rfloor \geq \sum_{i = \lceil n/2 \rceil}^n \lfloor \epsilon i^{0.5} (\log i)^{-1} \rfloor \geq (n/2) \lfloor \epsilon (n/2)^{0.5} (\log n)^{-1} \rfloor \geq \frac{\epsilon}{2^{1.5}} n^{1.5} (\log n)^{-1} - \frac{n}{2}$. So, for infinitely many $n$,
\[
c_n(X) - \sum_{i = 2}^n \epsilon \sqrt{n} (\log n)^{-1} < \frac{\epsilon}{3} n^{1.5} (\log n)^{-1} - \frac{\epsilon}{2^{1.5}} n^{1.5}
(\log n)^{-1} + \frac{n}{2}.
\]
Since this last term approaches $-\infty$, we have verified (\ref{neginf3}).
We now apply Lemma~\ref{diff}, and see that there exist infinitely many $n$ for which
\begin{equation}\label{simult3}
c_n(X) - c_{n-1}(X) < \lfloor \epsilon n^{0.5} (\log n)^{-1} \rfloor.
\end{equation}
Rather than using Lemma~\ref{diff} to bound $c_n(X)$, we simply recall that by assumption, there exists $N$ so that
\begin{equation}\label{simult3.5}
c_n(X) < \lfloor n^{1.5} \rfloor
\end{equation}
for all $n>N$. This clearly implies that $c_{1 + c_n(X)}(X) \leq (n^{1.5})^{1.5} = n^{2.25}$ for any $n>N$. By Corollary~\ref{autbd}, for any of the infinitely many $n > N$ satisfying (\ref{simult3}) and (\ref{simult3.5}), $|\textrm{Aut}_{\lfloor (n-1)/2 \rfloor}(X, \sigma)|$ is bounded from above by
\begin{equation*}
(c_{1+c_n(X)}(X))^{2|A| (c_{n+1}(X) - c_n(X))} \leq (n^{2.25})^{2|A| \epsilon n^{0.5} (\log n)^{-1}} = e^{4.5 |A| \epsilon \sqrt{n}}.
\end{equation*}

Now, exactly as in the end of Theorem~\ref{mainthm2}, any finitely generated subgroup $H$ of $\grp$ has growth less than $e^{4.5 |A| \epsilon \sqrt{n}}$. Since $\epsilon > 0$ was arbitrary, by the Gap Conjecture $H$ must be virtually nilpotent. Exactly as in the proof of Theorem~\ref{mainthm2}, this implies that $\grp$ is locally finite.

\end{proof}

\begin{proof}[Proof of Theorem~\ref{mainthm2d}]

Choose any subshift $X$ where $\frac{c_n(X)}{n^{2} (\log n)^{-1}} \rightarrow 0$, and any $\epsilon > 0$. We claim that
\begin{equation}\label{neginf4}
\liminf c_n(X) - \sum_{i = 2}^n \lfloor \epsilon i (\log i)^{-1} \rfloor = -\infty.
\end{equation}
%\st{This is fairly obvious;}
Again, by assumption, there are infinitely many $n$ where $c_n(X) < \frac{\epsilon}{5} n^2 (\log n)^{-1}$. %, which is less than $\frac{\epsilon}{3} n^2 (\log n)^{-1}$.
Also,
\[
\sum_{i = 2}^n \lfloor \epsilon i (\log i)^{-1} \rfloor \geq
\sum_{i = \lceil n/2 \rceil}^n \lfloor \epsilon i (\log i)^{-1} \rfloor \geq
(n/2) \lfloor (\epsilon n/2) (\log n)^{-1} \rfloor \geq \frac{\epsilon}{4} n^2 (\log n)^{-1} - \frac{n}{2}.
\]
So, for infinitely many $n$,
\[
c_n(X) - \sum_{i = 2}^n \lfloor i (\log i)^{-1} \rfloor < \frac{\epsilon}{5}  n^2 (\log n)^{-1} - \frac{\epsilon}{4} n^2 (\log n)^{-1} +\frac{n}{2}.
\]
Since this last term approaches $-\infty$, we have verified (\ref{neginf4}).
We now apply Lemma~\ref{diff}, and see that there exist infinitely many $n$ for which
\begin{equation}\label{simult4}
c_n(X) - c_{n-1}(X) < \lfloor \epsilon n (\log n)^{-1} \rfloor.
\end{equation}
Rather than using Lemma~\ref{diff} to bound $c_n(X)$, we simply recall that by assumption, there exists $N$ so that
\begin{equation}\label{simult4.5}
c_n(X) < n^2
\end{equation}
for all $n>N$. This clearly implies that $c_{1 + c_n(X)}(X) \leq (n^{2})^{2} = n^{4}$ for any $n>N$. Now, by Theorem~\ref{autbd2}, for any of the infinitely many $n > N$ satisfying (\ref{simult4}) and (\ref{simult4.5}), $\left|\textrm{Aut}^{(FIP)}_{\lfloor (n-1)/2 \rfloor}(X, \sigma)\right|$ is bounded from above by
\begin{equation*}
(c_{1+c_n(X)}(X))^{2|A| (c_{n+1}(X) - c_n(X))} \leq (n^{4})^{2|A| \epsilon n (\log n)^{-1}} = e^{8 |A| \epsilon n}.
\end{equation*}

By compactness, the set of isolated periodic points of $X$ is finite; denote this set by $\mathcal{P}$. It is straightforward to check that the set $\mathcal{P}$ is invariant under any automorphism of $(X,\sigma)$, and we may consider the homomorphism
\begin{equation*}
\begin{gathered}
\pi_{\mathcal{P}} \colon \aut \to \textnormal{Aut}(\mathcal{P},\sigma|_{\mathcal{P}})\\
\pi_{\mathcal{P}} \colon \phi \mapsto \phi|_{\mathcal{P}}.
\end{gathered}
\end{equation*}
By definition we have $\textrm{Aut}^{(FIP)}(X, \sigma) = \textrm{ker}(\pi_{\mathcal{P}})$.
Now, exactly as in the end of Theorem~\ref{mainthm2}, any finitely generated subgroup $H$ of $\textrm{Aut}^{(FIP)}(X, \sigma)$ has growth less than $e^{8 |A| \epsilon n}$. Since $\epsilon$ was arbitrary, this implies that $H$ has subexponential growth, and so is amenable. Then, every finitely generated subgroup of $\textrm{Aut}^{(FIP)}(X, \sigma)$ is amenable, implying that $\textrm{Aut}^{(FIP)}(X, \sigma)$ is amenable. %**I don't think we need a reference, but in case we do, here is one:~\cite[Cor. 9.1.7]{LohGGT}.**
Since $\mathcal{P}$ is finite, $\textrm{Aut}(\mathcal{P},\sigma|_{\mathcal{P}})$ is a finite group, and hence $\textrm{Aut}^{(FIP)}(X, \sigma)$ is of finite index in $\aut$.  Since $\textrm{Aut}^{(FIP)}(X, \sigma)$ is amenable, this implies $\aut$ is amenable.
%\ssred{I added a bit to clarify here.}

%\ssred{Note: strikeout all between **'s, strikeout command not working for some reason}**We may then choose $N$ so that every $N$-letter subword of any isolated periodic point occurs only within that periodic point. This means that when $n > N$ every $\phi \in \textrm{Aut}_{n/2}(X)$ can be written as a composition of $\psi \in \textrm{Aut}^{(FIP)}_{n/2}(X)$ and some $\eta \in \textrm{Aut}_N(X)$ which permutes the finite set of isolated periodic points and is the identity on the rest of $X$. (PROBABLY NEED TO SAY MORE. THERE ARE EASIER WAYS TO DO THIS; I COULD JUST PROVE Theorem~\ref{autbd2} FOR ALL OF AUT(X) BY INCLUDING THE CONSTANT MULTIPLE THERE...)**

\end{proof}

\section{Realizing locally finite groups as $\grp$ for low complexity}

In this section, we prove Theorem~\ref{mainthm}. We first outline the general block concatenation construction of subshifts; for an introduction, see (\cite{HK}, \cite{Pset}). It is simple to guarantee that such a subshift be minimal. The difficult part will be to engineer our subshift to have low complexity and prescribed $\grp$.

A \textbf{block concatenation subshift} is defined by an alphabet $\mathcal{A}$, sequences $(n_k)$ of positive integers, and sets
$A_k \subset \mathcal{A}^{n_k}$ with the following property: for every $k$, every $w \in A_{k+1}$ is a concatenation of $A_k$-words. (This of course implies that $n_k | n_{k+1}$ for all $k$.) We will always take $n_1 = 1$ and $A_1 = \mathcal{A}$. Given such $\mathcal{A}$, $(n_k)$, and $(A_k)$, $X$ consists of all `limits' of $A_k$-words (as $k \rightarrow \infty$); more formally, $x \in X$ if and only if for all $n$, there exists $k$ so that $x([-n,n])$ is a subword of some $A_k$-word.

We first prove some general lemmas about block concatenation subshifts. The following is well-known (\cite{HK}), but we will give a short proof for completeness.

%It is well-known that $X$ can be guaranteed to be minimal under a simple extra assumption. (GIVE REFERENCE?)

\begin{lemma}\label{min}
If every $A_{k+1}$-word, written as a concatenation of $A_k$-words, contains each $A_k$-word at least once, then $X$ is minimal.
\end{lemma}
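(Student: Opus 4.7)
The plan is to verify minimality directly, by showing that every $w \in L(X)$ is a subword of every $x \in X$. Fix such a $w$; by the definition of $L(X)$ and of the block concatenation subshift, $w$ appears as a subword of $x'([-N,N])$ for some $x' \in X$ and $N$, which in turn sits inside some $A_k$-word $u$. So $w$ is a subword of some fixed $u \in A_k$.

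Next, I would propagate this upward by iterating the hypothesis. By hypothesis every $A_{k+1}$-word is a concatenation of $A_k$-words containing each element of $A_k$ at least once, so every $A_{k+1}$-word contains $u$, and hence contains $w$. A routine induction on $\ell - k$ (using that every $A_{j+1}$-word is a concatenation of $A_j$-words, which in particular forces $n_{k+1} \mid n_\ell$ for all $\ell \geq k+1$) shows that every $A_\ell$-word for $\ell \geq k+1$ is a concatenation of $A_{k+1}$-blocks, and by the previous sentence each such $A_{k+1}$-block contains $w$.

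The small combinatorial step is an alignment pigeonhole: set $M := 2n_{k+1}$. Any window of length $M$ sitting inside an $A_\ell$-word ($\ell \geq k+1$), viewed relative to the aligned decomposition of that $A_\ell$-word into $A_{k+1}$-blocks, must contain at least one full $A_{k+1}$-block (any interval of length $M$ in $\mathbb{Z}$ contains a block of the form $[b, b + n_{k+1})$ with $b$ a multiple of $n_{k+1}$). Hence every length-$M$ subword of any $A_\ell$-word with $\ell \geq k+1$ contains $w$.

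Finally, given any $x \in X$ and any $n$ with $2n+1 \geq M$, the definition of $X$ provides some $\ell_n$, which we may take to satisfy $\ell_n \geq k+1$, such that $x([-n,n])$ is a subword of an $A_{\ell_n}$-word; the previous step then shows $w$ appears in $x([-n,n])$, and in particular $w$ is a subword of $x$. Since $w$ and $x$ were arbitrary, $X$ is minimal. There is no serious obstacle here; the only point deserving any care is the alignment of $A_{k+1}$-blocks inside $A_\ell$-words, which is immediate from the iterated concatenation structure and the divisibility $n_{k+1} \mid n_\ell$.
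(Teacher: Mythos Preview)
Your argument is correct and follows the same strategy as the paper: show $w$ lies in some $A_k$-word, hence in every $A_{k+1}$-word, hence in every $x\in X$. The paper's proof is terser, asserting in one line that ``every $x\in X$ contains an $A_{k+1}$-word,'' whereas you spell out this step via the alignment pigeonhole with $M=2n_{k+1}$; this extra detail is fine but not strictly necessary.
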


\begin{proof}
For every $w \in L(X)$, there exists $k$ so that $w$ is a subword of some $A_k$-word. But then $w$ is a subword of every $A_{k+1}$-word. For every $x \in X$, $x$ contains an $A_{k+1}$-word, so contains $w$. Since $x$ and $w$ were arbitrary, $X$ is minimal.
\end{proof}

By definition, for every $x \in X$ and $k \in \mathbb{N}$, $x$ can be written as a bi-infinite concatenation of
$A_k$-words. We say that $X$ is \textbf{uniquely decomposable} if this decomposition is unique for all $x \in X$. This can also be achieved through a simple assumption about repetitions of $A_k$-words.

\begin{lemma}\label{decomp}
If $(d_k)$ is an integer sequence where each $A_{k+1}$-word, written as a concatenation of $A_k$-words, begins with $d_{k+1}$ repetitions of the same $A_k$-word, ends with $d_{k+1}$ repetitions of the same $A_k$-word, and does not elsewhere contain $d_{k+1}$ repetitions of the same $A_k$-word, then $X$ is uniquely decomposable.
\end{lemma}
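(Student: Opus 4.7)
The plan is to prove by induction on $k$ that for every $x \in X$ the $A_k$-decomposition of $x$ is unique. The base case $k = 1$ is trivial since $A_1 = \mathcal{A}$ and the decomposition into single letters is tautological. For the inductive step, fix the (by hypothesis) unique $A_k$-decomposition $\ldots w_{-1} w_0 w_1 \ldots$ of $x$. Any $A_{k+1}$-decomposition of $x$ refines to some $A_k$-decomposition (because, by definition, each $A_{k+1}$-word is itself a concatenation of $A_k$-words), and by the inductive hypothesis this refinement must coincide with the given one. So any two $A_{k+1}$-decompositions $D_1, D_2$ of $x$ differ only in how the $w_j$ are grouped into consecutive bundles of size $m_k := n_{k+1}/n_k$; the difference is encoded by a shift $s \in \{0, 1, \ldots, m_k - 1\}$, and the goal is to show $s = 0$.

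To recover bundle boundaries from $x$ alone, I would look at maximal runs of identical $A_k$-words of length $\geq d_{k+1}$ in the sequence $(w_j)$. By the defining hypothesis, inside any single $A_{k+1}$-word such a run can only occupy the first $d_{k+1}$ or the last $d_{k+1}$ positions. Consequently, in the bi-infinite sequence $(w_j)$, every maximal run of length $\geq d_{k+1}$ either (i) sits entirely at the start or end of a single $D_1$-bundle, with length exactly $d_{k+1}$, or (ii) arises from the merger of the end-run of one bundle with the start-run of the next, yielding a single maximal run of length $2 d_{k+1}$ straddling a bundle boundary. In particular, the starting positions of such maximal runs lie only in residue classes $0$ or $-d_{k+1}$ modulo $m_k$ relative to $D_1$, and the residue $-d_{k+1}$ is always realized (at every junction, whether merged or not, because each bundle ends with a $d_{k+1}$-run). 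Running the same analysis with respect to $D_2$ gives the symmetric constraint with residues $s$ and $s - d_{k+1}$, where $s - d_{k+1}$ is always realized. Since the set of starting positions is an intrinsic property of $x$, the always-realized residues must agree: $s - d_{k+1} \equiv -d_{k+1} \pmod{m_k}$ or $s - d_{k+1} \equiv 0 \pmod{m_k}$, and a symmetric constraint applied in the other direction forces $s \equiv 0 \pmod{m_k}$.

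The main technical nuisance is the careful bookkeeping around merged versus non-merged junctions, which produce different subsets of the two candidate residues, and one must verify that the always-present residue $-d_{k+1}$ alone pins down the shift uniquely (the only edge case, $m_k = 2 d_{k+1}$, does not occur in the intended applications of the lemma since each $A_{k+1}$-word has interior $A_k$-positions). Once the residue-class analysis is complete, $s = 0$ follows and the induction closes, giving unique decomposability of $X$.
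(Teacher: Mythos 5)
Your argument is correct and runs on the same combinatorial engine as the paper's proof --- namely, that after the inductive hypothesis fixes the $A_k$-decomposition, a run of at least $d_{k+1}$ identical consecutive $A_k$-words can only sit at the very beginning or very end of an $A_{k+1}$-block (or straddle a junction as a merged run of length exactly $2d_{k+1}$) --- but you conclude differently. The paper gives a direct recognition rule: aligned patterns $v^{d_{k+1}}w^{d_{k+1}}$ occur precisely with midpoint at a junction between $A_{k+1}$-words, so the junction set is intrinsic to $x$ and uniqueness is immediate. You instead take two candidate groupings, observe they differ by an offset $s$ modulo $m_k = n_{k+1}/n_k$, and force $s=0$ by comparing residues of the starting positions of maximal long runs. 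Both are valid; the paper's version is shorter and yields an explicit boundary-detection procedure, while your residue bookkeeping is heavier but has the virtue of isolating exactly where the argument degenerates: when an $A_{k+1}$-word consists of nothing but its initial and final runs (e.g.\ $m_k = 2d_{k+1}$), the offset is not pinned down, and in fact unique decomposability can genuinely fail there (with $A_{k+1} \supseteq \{v^{d}w^{d}, w^{d}v^{d}\}$ a point of the form $\ldots v^{d}w^{d}v^{d}w^{d}\ldots$ can lie in $X$ and admit two groupings). The paper's one-line claim that the midpoint ``can only occur at the border'' silently assumes this degenerate case away as well; it is harmless for the construction in Theorem~\ref{mainthm}, where each $A_{k+1}$-word contains many $A_k$-words strictly between its boundary runs, but you were right to make the implicit nondegeneracy explicit rather than ignore it.
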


\begin{proof}
We prove by induction on $k$. The base case $k = 1$ simply says that locations of $A_1$-words are uniquely determined for such subshifts, which is trivial since we always take $A_1$ to be the alphabet $\mathcal{A}$. For the inductive step, assume for some $k$ that for every $x \in X$, $x$ can be uniquely decomposed into $A_k$-words. Given this decomposition, one simply searches for $2d_{k+1}$-fold concatenations of the form $v^{d_{k+1}} w^{d_{k+1}}$ (with $v,w \in A_k$ possibly equal), which can only occur with midpoint at the border between $A_{k+1}$-words. This implies that $x$ can be written in a unique way as a concatenation of $A_{k+1}$-words, completing the inductive step and the proof.
\end{proof}

Suppose $X$ is uniquely decomposable, let $k \in \mathbb{N}$, and let $\tau \colon A_{k} \to A_{k}$ be a permutation of the $A_{k}$-words. Associated to $\tau$ is a continuous shift-commuting map $\alpha_{\tau} \colon X \to \left(\mathcal{A}^{\mathbb{Z}},\sigma\right)$ defined as follows: given $x \in X$, decompose $x$ as a concatenation of $A_{k}$-words, and apply $\tau$ to each $A_{k}$-word appearing in $X$. (Note that this map is only well-defined because $(X, \sigma)$ was assumed uniquely decomposable; shift-commuting and continuity are then nearly immediate from the definition.) Written symbolically, if we have
\[
x = \ldots w_{-1}w_{0}w_{1} \ldots, \qquad w_{i} \in A_{k}
\]
then
\begin{equation}\label{alpha}
\alpha_{\tau}(x) = \ldots \tau(w_{-1})\tau(w_{0})\tau(w_{1})\ldots.
\end{equation}

Note that depending on $\tau$, $\alpha_{\tau}$ may or may not map $X$ back into $X$; if it does, then $\alpha_{\tau}$ is an automorphism of $(X,\sigma)$.
%\ssred{**note: at first glance it may seem like it actually needs to map $X$ \emph{onto} $X$, but since $\alpha_{\tau}$ will be finite order, if it maps $X$ into $X$ it maps it onto itself. We can add this as a comment if you want, but it doesn't seem so important to me.**}

%All constructions we make will be readily seen to satisfy the hypotheses of Lemmas~\ref{min} and \ref{decomp}.

%We note (REFERENCES TO TOEPLITZ STUFF?) that unique decomposability of $X$ immediately yields a continuous factor from $(X, \sigma)$ to the odometer generated by the sequence $(n_k)$. (DETAILS LATER)

%We now give a general bound for complexity of a block concatenation subshift $X$ along a subsequence, which will be used to verify the complexity assumption of Theorem~\ref{mainthm}.

%\begin{lemma}\label{cplx}
%For any block concatenation subshift and any $k$, $c_{n_k}(X) \leq n_k |A_k|^2$.
%\end{lemma}

%\begin{proof}
%Every $n_k$-letter subword $w$ of any $x \in X$ must occur within the concatenation of two $A_k$-words, and such a subword is determined entirely by those $A_k$-words and the location within the first $A_k$-word at which $w$ begins.
%\end{proof}

%Achieving upper bounds for all (sufficiently large) lengths is more technical, and requires information not only about $n_k$ and $A_k$, but the structure/complexity of the $A_k$-words themselves. (DO LATER. THIS MIGHT NOT EVEN BE POSSIBLE FOR THE LEVEL OF GENERALITY WE DO! THAT WOULD SAVE SOME TIME...)\\

We are now prepared to define the block concatenation subshifts which will prove Theorem~\ref{mainthm}.

\begin{proof}[Proof of Theorem~\ref{mainthm}]

Choose any unbounded increasing $f$ and countable locally finite group $G$; $G$ can be written, by definition, as the union of an increasing chain of finite subgroups, i.e. there exist finite groups $H_k$ so that $H_k$ is a proper subgroup of $H_{k+1}$ for all $n$, and $G$ is the union of the $H_k$. Choose an increasing sequence $(b_k)$ of integers greater than $1$ with the property that $f(b_k) > k(|H_k|^{5|H_{k+1}| + 2|H_k|^2})$ for all $k$.

Our technique is somewhat similar to that of \cite{BLR}, where a subshift $X$ was constructed for which the additive group of rationals embeds into $\aut$, in that we will construct, for every $k$, automorphisms defined by their action on the set $A_k$, and then show that every automorphism of $X$ can be realized in this way. Specifically, for each $k$, we will define a group of permutations of the words in $A_k$ which is isomorphic to $H_k$. We will then show that for any $k$, each $A_k$-permutation induces an $A_{k+1}$-permutation by coordinatewise application to $A_k$-words, in a manner that is compatible with the containment of $H_k$ as a subgroup of $H_{k+1}$.

We begin with some notation. For every $k$, fix an ordering $\{h_i^{(k)}\}_{i=1}^{|H_k|}$ of the elements of $H_k$ with
$h_1^{(k)} = \{\textrm{id}\}$. Write $q_k = |H_k|/|H_{k-1}|$, and choose any set $\{r_i^{(k)}\}_{i = 1}^{q_k}$ of representatives for right cosets of
$H_{k-1}$ in $H_{k}$, i.e.
\[
H_{k} = \bigcup_{i=1}^{q_k} H_{k-1} r_i^{(k)}.
\]
Without loss of generality, we will always take $r_1^{(k)} = \{\textrm{id}\}$, the identity element of $G$ (and of all $H_k$).
We will now recursively define the sets $A_k$ and lengths $n_k$. In our construction, $|A_k| = |H_k|$ for every $k$.

The $k = 1$ case is simple; for every $g \in H_1$, define a symbol $w_g^{(1)}$, and define the alphabet $A_1 = \{w_g^{(1)} \ : \ g \in H_1\}$; clearly $|A_1| = |H_1|$.

%For $k = 2$, for every $g \in H_2$, we first write $g = g' r_i^{(2)}$ for some $g' \in H_1$ and $1 \leq i \leq q_2$. We then define
%\begin{multline*}
%w_{g}^{(2)} = w_{g' r_i^{(2)}}^{(2)} = \Big(w_{g' h_2^{(1)}}^{(1)}\Big)^{2b_2 q_2}\\
%\left(
%\Big(w_{g' h_1^{(1)}}^{(1)}\Big)^{b_2} \Big(w_{g' h_2^{(1)}}^{(1)}\Big)^{b_2} \Big|
%\ldots \Big|
%\Big(w_{g' h_{|H_1|-1}^{(1)}}^{(1)}\Big)^{b_2} \Big(w_{g' h_{|H_1|}^{(1)}}^{(1)}\Big)^{b_2}\right)\\
%\Big(w_{g' h_{1}^{(1)}}^{(1)}\Big)^{ib_2} \Big(w_{g' h_{2}^{(1)}}^{(1)}\Big)^{b_2(3q_2-i)}.
%\end{multline*}
%Here, the concatenation inside the largest parentheses is a list of all pairs of the form
%$\Big(w_{g' h_a^{(1)}}^{(1)}\Big)^{b_2} \Big(w_{g' h_b^{(1)}}^{(1)}\Big)^{b_2}$, with pairs $(a,b)$ listed in lexicographic order. We %then define $A_2 = \{w_{g}^{(2)} \ : \ g \in H_2\}$, and note that all $A_2$-words begin and end with $2b_2 c_2$-fold repetitions of an
%$A_1$-word, and contain no other such repetitions. Note that the length of all $A_2$-words is $n_2 = b_2 n_1(2|A_1|^2 + 5q_2) > b_2 %q_2$.%, and so by Lemma~\ref{cplx}, $c_{n_2}(X) \leq n_2 |A_2|^2 \leq (n_2 f(c_2))/2 \leq (n_2 f(n_2))/2$.

We now define $A_{k+1}$ for $k \geq 1$, assuming that $A_k = \{w_g^{(k)} \ : \ g \in H_k\}$ has been defined already.
Informally, the idea is that we will define $w_g^{(k+1)}$ for every $g \in H_{k+1}$ by assigning a different `template' concatenation of $A_k$-words to each coset representative $r_i^{(k+1)}$, and then permuting the $A_k$-words $\{w_g^{(k)}\}_{g \in H_k}$ in those templates by multiplying on the left by a properly chosen $g'$ in the subscripts.% by $g' := g \left(r_i^{(k+1)}\right)^{-1} \in H_k$.

More formally, for every $g \in H_{k+1}$, we first write $g = g' r_i^{(k+1)}$ for some $g' \in H_k$ and $1 \leq i \leq q_{k+1}$. We then define
\begin{multline*}
w_{g}^{(k+1)} = w_{g' r_i^{(k+1)}}^{(k+1)} = \Big(w_{g' h_2^{(k)}}^{(k)}\Big)^{2b_{k+1} q_{k+1}}\\
\left(
\Big(w_{g' h_1^{(k)}}^{(k)}\Big)^{b_{k+1}} \Big(w_{g' h_2^{(k)}}^{(k)}\Big)^{b_{k+1}} \Big|
\ldots \Big|
\Big(w_{g' h_{|H_k|-1}^{(k)}}^{(k)}\Big)^{b_{k+1}} \Big(w_{g' h_{|H_k|}^{(1)}}^{(1)}\Big)^{b_{k+1}}\right)\\
\Big(w_{g' h_{1}^{(k)}}^{(k)}\Big)^{ib_{k+1}} \Big(w_{g' h_{2}^{(k)}}^{(k)}\Big)^{b_{k+1}(3q_{k+1}-i)}.
\end{multline*}
Here, the concatenation inside the largest parentheses is a list of all pairs of the form
$\Big(w_{g' h_a^{(k)}}^{(k)}\Big)^{b_{k+1}} \Big(w_{g' h_b^{(k)}}^{(k)}\Big)^{b_{k+1}}$, with pairs $(a,b)$ listed in lexicographic order.
We then define $A_{k+1} = \{w_g^{(k+1)} \ : \ g \in H_{k+1}\}$, and note that all $A_{k+1}$-words begin and end with
$2b_{k+1} q_{k+1}$-fold repetitions of an $A_k$-word, and contain no other such repetitions. Note that the length of all $A_{k+1}$-words is
\begin{equation}\label{lengthbd}
n_{k+1} = b_{k+1} n_k(2|A_k|^2 + 5q_{k+1}) > b_{k+1} q_{k+1}.
\end{equation}%, and so by Lemma~\ref{cplx}, $c_{n_{k+1}}(X) \leq n_{k+1} |A_{k+1}|^2 \leq (n_{k+1} f(c_{k+1}))/(k+1) \leq (n_{k+1} f(n_{k+1}))/(k+1)$.

Recursively, this defines $n_k$ and $A_k$ for all $k$, and so an associated block concatenation subshift $X$. We here note a few properties of $X$ which will be useful later.

\begin{itemize}
\item $X$ is minimal by Lemma~\ref{min}
\item $X$ is uniquely decomposable by Lemma~\ref{decomp} (with $d_k = 2b_k q_k$)
\item Concatenations of the form $uvw$ with $u,v,w \in A_k$, $u \neq v$, and $v \neq w$ never appear in points of $X$, and that every other concatenation $uvw$ with $u = v$ or $v = w$ appears within every $A_{k+1}$-word.
\end{itemize}

%, and since $c_{n_k}(X) \leq (n_k f(n_k))/k$ for all $k$, $\liminf \frac{c_n(X)}{nf(n)} = 0$.

We now wish to bound the complexity of $X$ from above to show that $\frac{c_n(X)}{nf(n)} \rightarrow 0$.
Choose any length $n$; there exists $k$ so that $n_k \leq n < n_{k+1}$. We first treat the case where
$n \in [n_k, b_{k+1} n_k)$. All points of $X$ are concatenations of $A_k$-words, and by definition of $A_{k+1}$,
each $A_k$-word is repeated some number of times which is a multiple of $b_{k+1}$. Therefore, since $n < b_{k+1} n_k$,
any word $w \in L_n(X)$ is of the form $s qq \ldots qq rr \ldots rr p$, where $q,r$ are $A_k$-words, $s$ is a suffix of $q$, and $p$ is a prefix of $r$. Then $w$ is determined completely by the location at which the transition from $q$ to $r$ occurs, and the choices of $q$ and $r$. (The case where $w$ has no transition is still included here by just taking $q = r$.) So, $c_n(X) \leq n |A_k|^2 = n|H_k|^2$.
Since $|H_k|^2 < f(b_k)/k < f(n_k)/k$ and $n \geq n_k$, we have $c_n(X) \leq n f(n_k)/k \leq n f(n)/k$.

Now consider the case where $n \in [b_{k+1} n_k, n_{k+1})$. Any word $w \in L_n(X)$ must be of the form
$s u_1^{b_{k+1}} u_2^{b_{k+1}} \ldots u_{j-1}^{b_{k+1}} p$, where $u_1, \ldots u_{j-1} \in A_k$,
$s$ is a suffix of some word $u_0^{b_{k+1}}$ for $u_0 \in A_k$, and $p$ is a prefix of some word $u_j^{b_{k+1}}$ for $u_j \in A_k$.
By (\ref{lengthbd}), $\frac{n_{k+1}}{b_{k+1} n_k} < 5q_{k+1} + 2|A_k|^2$, %\ssred{can we add a bit more detail for this step},
and so $j < 5|H_{k+1}| + 2|H_k|^2$.
Clearly, $w$ is determined by the words $u_0, \ldots, u_j$ and the length of $s$, so
$c_n(X) \leq b_{k+1} n_k |A_k|^{j+1} = b_{k+1} n_k |H_k|^{j+1}$. Since $j < 5|H_{k+1}| + 2|H_k|^2$,
\[
|H_k|^{j+1} \leq  |H_k|^{5|H_{k+1}| + 2|H_k|^2} < f(b_{k})/k < f(b_{k+1} n_k)/k.
\]
Since $n \geq b_{k+1} n_k$, this implies that $c_n(X) \leq n f(b_{k+1} n_k)/k \leq n f(n)/k$.% \ssred{and more here as well}

We've shown that for all $n \geq n_k$, $c_n(X) \leq nf(n)/k$, and so $\frac{c_n(X)}{nf(n)} \rightarrow 0$.
It remains only to show that $\grp$ is isomorphic to $G$.
% We also note that $n_k > b_k n_{k-1}$ for all $k$, and so $n_k > \Pi_{i=1}^k b_i$ for all $k$. Therefore, as long as $(b_k)$ is chosen so large that $|A_k|^2 = s_k^2 < f\left(\Pi_{i=1}^k b_i \right)/k$, Lemma~\ref{cplx} implies that $c_{n_k}(X) \leq n_k f(n_k)/k$ for all $k$, and therefore that $\liminf \frac{c_n(X)}{nf(n)} = 0$.

%Then, as outlined above, define $X$ to be the set of all sequences where every subword appears within some $A_k$-word for some $k$. It is clear from the definition that for any $x \in X$ and $k$, $x$ is a concatenation of $A_k$-words.
%We note that the concatenation making up any $A_{k+1}$-word begins and ends with a repeated $A_k$-word, and has no other repeated $A_k$-words. This implies the following recognizability property for our subshift:

%\begin{equation}\label{rec}
%\textrm{If } x \in X \textrm{ and } x[i, i+s_k) \in A_k, \textrm{ then } x[j, j+s_k) \in A_k \textrm{ iff } j \equiv i \pmod{s_k}.
%\end{equation}
 %The definition of $X$ yields many automorphisms coming from permutations of $A_k$-words.%characterize the automorphisms of $X$.

For any $k$ and $h \in H_k$, define the permutation $\pi_{k,h}$ of the set $\{w_g^{(k)}\}$ of $A_k$-words by left multiplication of the subscript, i.e. $\pi_{k,h}(w_g^{(k)}) = w_{hg}^{(k)}$. In a slight abuse of notation, we also define $\pi_{k,h}$ to act on concatenations of $A_k$-words by `coordinatewise' application, i.e. if $w_1, \ldots, w_m \in A_k$, $\pi_{k,h}(w_1 \ldots w_m) :=
\pi_{k,h}(w_1) \ldots \pi_{k,h}(w_m)$.

\begin{lemma}\label{consist}
For any $k \ge 1$, $h \in H_k$, and $w \in A_{k+1}$, $\pi_{k,h}(w) = \pi_{k+1,h}(w)$.
\end{lemma}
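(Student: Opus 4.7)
The plan is to verify the identity by directly unfolding the recursive definition of $w_g^{(k+1)}$ on both sides. Fix $w = w_g^{(k+1)} \in A_{k+1}$, and write $g = g' r_i^{(k+1)}$ with $g' \in H_k$ and $i \in \{1,\dots,q_{k+1}\}$. The key observation is that since $h \in H_k$, the product $hg'$ also lies in $H_k$, and therefore $hg = (hg')\, r_i^{(k+1)}$ is the canonical right-coset decomposition of $hg$, using the \emph{same} coset representative $r_i^{(k+1)}$ (and the same index $i$) as $g$.

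From there I would expand $\pi_{k+1,h}(w) = w_{hg}^{(k+1)}$ using the defining formula, obtaining exactly the same positional structure as $w_g^{(k+1)}$: the prefix $\bigl(w_{hg' h_2^{(k)}}^{(k)}\bigr)^{2b_{k+1} q_{k+1}}$, the middle enumeration of pairs $\bigl(w_{hg' h_a^{(k)}}^{(k)}\bigr)^{b_{k+1}}\bigl(w_{hg' h_b^{(k)}}^{(k)}\bigr)^{b_{k+1}}$ in lexicographic order, and the $i$-dependent tail $\bigl(w_{hg' h_1^{(k)}}^{(k)}\bigr)^{i b_{k+1}} \bigl(w_{hg' h_2^{(k)}}^{(k)}\bigr)^{b_{k+1}(3q_{k+1} - i)}$. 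In other words, $\pi_{k+1,h}$ replaces every subscript $g' h_j^{(k)}$ appearing in the formula for $w_g^{(k+1)}$ by $hg' h_j^{(k)}$. On the other side, by the coordinatewise definition, $\pi_{k,h}$ applied to the $A_k$-decomposition of $w_g^{(k+1)}$ sends each block $w_{g' h_j^{(k)}}^{(k)}$ to $w_{hg' h_j^{(k)}}^{(k)}$. Comparing block by block, the two concatenations coincide.

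The only real obstacle is bookkeeping: one must check that the three structurally distinct regions (prefix, pair-enumeration, $i$-dependent tail) transform identically under both operations. The invariance of the coset index $i$ under left multiplication by $h \in H_k$ is precisely what guarantees that the tail of $w_{hg}^{(k+1)}$ matches the coordinatewise-translated tail of $w_g^{(k+1)}$; the remaining two regions are handled by the trivial remark that left multiplication by $h$ commutes with ``concatenate the $A_k$-words prescribed by the (fixed) template.''
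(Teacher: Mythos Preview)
Your proposal is correct and follows essentially the same approach as the paper: unfold the recursive definition of $w_g^{(k+1)}$, apply each operation, and compare block by block. You make explicit the one point the paper leaves tacit, namely that $h \in H_k$ forces $hg = (hg')\,r_i^{(k+1)}$ to have the \emph{same} coset index $i$, which is exactly what ensures the $i$-dependent tail matches; otherwise the arguments are identical.
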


\begin{proof}
This comes from the definition of $A_{k+1}$. Informally, it's due to the fact that a left multiplication by any $h \in H_k$ in the subscript of an $A_{k+1}$-word $w_{g' r_i^{(k+1)}}^{(k+1)}$ passes through to left multiplications of all subscripts of the component $A_k$-words.

More formally, choose any $k \ge 1$, $h \in H_k$ and $g \in H_{k+1}$; we can write $g = g' r_i^{(k+1)}$ for some $g^{\prime} \in H_k$ and
$1 \leq i \leq q_{k+1}$. Then,
\begin{multline*}
\pi_{k,h}\left(w_{g}^{(k+1)}\right) = \pi_{k,h}\left(w_{g' r_i^{(k+1)}}^{(k+1)}\right)
= \pi_{k,h}\Bigg(
\Big(w_{g' h_2^{(k)}}^{(k)}\Big)^{2b_{k+1} q_{k+1}}\\
\left(
\Big(w_{g' h_1^{(k)}}^{(k)}\Big)^{b_{k+1}} \Big(w_{g' h_2^{(k)}}^{(k)}\Big)^{b_{k+1}} \Big|
\ldots \Big|
\Big(w_{g' h_{s_k-1}^{(k)}}^{(k)}\Big)^{b_{k+1}} \Big(w_{g' h_{s_k}^{(1)}}^{(1)}\Big)^{b_{k+1}}\right)\\
\Big(w_{g' h_{1}^{(k)}}^{(k)}\Big)^{ib_{k+1}} \Big(w_{g' h_{2}^{(k)}}^{(k)}\Big)^{b_{k+1}(3q_{k+1}-i)}
\Bigg)
= \Big(w_{hg' h_2^{(k)}}^{(k)}\Big)^{2b_{k+1} q_{k+1}}\\
\left(
\Big(w_{hg' h_1^{(k)}}^{(k)}\Big)^{b_{k+1}} \Big(w_{hg' h_2^{(k)}}^{(k)}\Big)^{b_{k+1}} \Big|
\ldots \Big|
\Big(w_{hg' h_{s_k-1}^{(k)}}^{(k)}\Big)^{b_{k+1}} \Big(w_{hg' h_{s_k}^{(1)}}^{(1)}\Big)^{b_{k+1}}\right)\\
\Big(w_{hg' h_{1}^{(k)}}^{(k)}\Big)^{ib_{k+1}} \Big(w_{hg' h_{2}^{(k)}}^{(k)}\Big)^{b_{k+1}(3q_{k+1}-i)}\\
= w_{hg' r_i^{(k+1)}}^{(k+1)} = \pi_{k+1,h}\left(w_{g' r_i^{(k+1)}}^{(k+1)}\right) = \pi_{k+1,h}\left(w_{g}^{(k+1)}\right).\\
\end{multline*}

\end{proof}

%We note the following key consistency condition: each $\pi_{k,g}$ also induces a map on $A_{k+1}$-words by application `coordinatewise' to the concatenation of $A_k$-words making up any $A_{k+1}$-word. We claim that the induced map is precisely $\pi_{k+1, g}$ (viewing $g$ as an element of $H_{k+1}$). (PROOF LATER)

In particular, by induction, Lemma~\ref{consist} implies that for any $k < m$, $\pi_{k,h}$ induces the permutation $\pi_{m,h}$ of
$A_m$-words by coordinatewise action. By passing to limits, we see that in fact $\pi_{k,h}$ induces a self-bijection of $X$, which we denote by $\phi_{k,h}$. Since $X$ is uniquely decomposable, $\phi_{k,h}$ is continuous and shift-commuting (in fact it
is one of the maps $\alpha_{\tau}$ defined in (\ref{alpha})), %\st{(SHOULD WE SAY MORE HERE?)} \ssred{I feel it's enough as written},
and so is in $\textrm{Aut}(X, \sigma)$.

For every $k$, the group $\{\phi_{k,h}\}_{h \in H_k}$ is clearly isomorphic to $H_k$ itself (since $\phi_{k,h} \circ \phi_{k,h'} = \phi_{k,hh'}$). The collection $\{\phi_{k,h}\}_{k \in \mathbb{N}, h \in H_k}$ then forms a subgroup of $\aut$, which we denote by $G_{\phi}$, and it follows from the above that $G_{\phi}$ is isomorphic to $G$.
We now claim that even after quotienting out by the subgroup generated by the shift, this is still true.

%\ssred{I think this lemma can be simplified a bit. To show the group is still isomorphic to $G$ in the quotient, it's enough to show it has trivial intersection with the kernel, i.e. with the group generated by $\sigma$. So it's enough to show that if $\phi_{k,h} = \sigma^{m}$ for some $k,h,m$, then $\phi_{k,h} = \textrm{id}$. I think you show this in the second half of the proof below.}

\begin{lemma}\label{mod}
%\st{For any $i, i' \in \mathbb{Z}$, $k, k' \in \mathbb{N}$, $h \in H_k$, and $h' \in H_{k'}$, if $\sigma^i \circ \phi_{k,h} = \sigma^{i'} \circ \phi_{k', h'}$, then $i = i'$ and $h = h'$.}
Let $\rho \colon \aut \to \grp$ denote the quotient map. Then $\rho(G_{\phi})$ is isomorphic to $G$.
\end{lemma}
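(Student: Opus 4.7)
The plan is to prove that $\rho$ restricted to $G_{\phi}$ is injective; since $G_{\phi}$ was shown to be isomorphic to $G$ in the paragraph preceding the lemma, this suffices. Because $H_k \subset H_{k+1}$ for all $k$ and Lemma~\ref{consist} gives $\phi_{k,h} = \phi_{k+1,h}$ whenever $h \in H_k$, every element of $G_{\phi}$ may be written as $\phi_{k,h}$ for some sufficiently large $k$ and some $h \in H_k$. So it will suffice to show: if $\phi_{k,h} = \sigma^m$ for some $m \in \mathbb{Z}$, then $h$ is the identity (and $m = 0$).

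The main idea I would use is that $\phi_{k,h}$ preserves the $A_{\ell}$-decomposition of every point of $X$ for every $\ell \geq k$; i.e., for each $x \in X$, the set of positions at which $A_{\ell}$-words begin in $x$ coincides with the corresponding set in $\phi_{k,h}(x)$. For $\ell = k$ this is immediate from the definition of $\phi_{k,h}$ via coordinatewise application of $\pi_{k,h}$ to the unique $A_k$-decomposition. For $\ell > k$, iterating Lemma~\ref{consist} shows that applying $\pi_{k,h}$ coordinatewise to the $A_k$-decomposition of $w_{g}^{(\ell)}$ yields $w_{hg}^{(\ell)}$, which is again an $A_{\ell}$-word; so the $A_{\ell}$-boundaries are preserved under $\phi_{k,h}$.

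Granting this, if $\phi_{k,h} = \sigma^m$, then for every $x \in X$ and every $\ell \geq k$ the set of $A_{\ell}$-boundary positions of $x$ must be invariant under translation by $m$. Within a fixed point, these boundaries form an arithmetic progression with common difference $n_{\ell}$, so this invariance forces $n_{\ell} \mid m$. Letting $\ell \to \infty$ and using $n_{\ell} \to \infty$, I conclude $m = 0$, and hence $\phi_{k,h} = \mathrm{id}$. Since $X$ is minimal, every $A_k$-word $w_g^{(k)}$ appears in the $A_k$-decomposition of some $x \in X$, and (by a short induction on the construction) distinct subscripts $g$ produce distinct $A_k$-words; so $\phi_{k,h} = \mathrm{id}$ forces $hg = g$ for all $g \in H_k$, giving $h = \mathrm{id}$ as required.

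The hardest step will be pinning down the claim that $\phi_{k,h}$ preserves all $A_{\ell}$-decompositions for $\ell \geq k$: this rests on unique decomposability of $X$ together with iteration of Lemma~\ref{consist} to transfer the coordinatewise $\pi_{k,h}$-action through successive levels. The remaining ingredients---distinctness of $A_k$-words, and the arithmetic-progression structure of $A_{\ell}$-boundaries within a single point---are immediate consequences of the block-concatenation construction.
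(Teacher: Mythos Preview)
Your proof is correct and follows essentially the same approach as the paper: both show injectivity of $\rho|_{G_\phi}$ by arguing that $\phi_{k,h}$ preserves the locations of $A_{\ell}$-words for large $\ell$, and then using unique decomposability to force $m=0$ from $\phi_{k,h}=\sigma^m$. The only cosmetic difference is that the paper picks a single $k'\geq k$ with $|m|<n_{k'}$ and concludes immediately, whereas you prove $n_\ell\mid m$ for all $\ell\geq k$ and let $\ell\to\infty$; your additional step deducing $h=\mathrm{id}$ is correct but not needed, since $\phi_{k,h}=\mathrm{id}$ already exhibits the element as the identity of $G_\phi$.
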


\begin{proof}
%Assume that $\sigma^i \circ \phi_{k,h} = \sigma^{i'} \circ \phi_{k', h'}$ as in the hypothesis. Choose $k'' \geq k, k'$ so that
%$|i - i'| < n_{k''}$. Then, by Lemma~\ref{consist}, $\phi_{k,h} = \phi_{k'', h}$ and $\phi_{k', h'} = \phi_{k'', h'}$. Therefore,
%$\sigma^i \circ \phi_{k'',h} = \sigma^{i'} \circ \phi_{k'', h'}$, and so $\sigma^{i - i'} = \phi_{k'', h'h^{-1}}$. The right-hand side preserves locations of $A_{k''}$-words, and so the left-hand side must as well. But $|i - i'| %< n_{k''}$, and so the only way for this to happen is if $i = i'$. Therefore, $\phi_{k'', h'h^{-1}}$ is the identity, which clearly implies that $h'h^{-1} = e$, and so $h = h'$.

It's enough to show that $G_{\phi} \cap \ker \rho = \textnormal{id}$. Thus it's enough to show that if $\phi_{k,h}=\sigma^{m}$ for some $k,h,m$, then $m=0$. Suppose then that $\sigma^{m} = \phi_{k,h}$. Choose $k' \geq k$ so that
$|m| < n_{k'}$. Then, by Lemma~\ref{consist}, $\phi_{k,h} = \phi_{k', h}$, and hence $\sigma^{m} = \phi_{k',h}$.
Finally, we note that for any $x \in X$, $\phi_{k',h}(x)$ has $A_{k'}$-words in the same locations as $x$. %Since $\phi_{k^{\prime},h}$ preserves locations of $A_{k'}$-words, $\sigma^{m}$ must as well.
Since $\sigma^m(x) = \phi_{k',h}(x)$ and $|m| < n_{k'}$, the only way for this to happen is if $m=0$, completing the proof.
\end{proof}

Finally, we must show that under the quotient map $\rho \colon \aut \to \grp$, the image of $G_{\phi}$ is all of $\grp$; in other words, that every automorphism of $X$ can be written as $\sigma^j \phi_{k,h}$ for some $j \in \mathbb{Z}$, $k \in \mathbb{N}$, and $h \in H_k$.

\

We need a technical definition; we say that $\phi \in \textrm{Aut}(X, \sigma)$ \textbf{preserves locations of $A_k$-words} if, for all $x \in X$ and $m < n$, if $x([m,m+n_k))$ is an $A_k$-word, then $(\phi x)([m,m+n_k))$ is an $A_k$-word. (Note that preserving locations of $A_k$-words clearly implies preserving locations of $A_j$-words for any $j < k$.) We say that $\phi$ simply \textbf{preserves locations} if it preserves locations of $A_k$-words for all $k$.

It's clear that for every $\phi \in \aut$ of range $n_{k}$ %(I'm OK with this, but is there a reason not to just say $\textrm{Aut}_{n_k}(X, \sigma)$?
and every $x \in X$, there exists a shift $i$ with $|i| \leq n_k/2$ so that $(\phi \circ \sigma^i)(x)$ has $A_k$-words at the same locations as $x$. In theory though, this is weaker than preserving locations of $A_k$-words, as $i$ could depend on $x$. For our examples, we can show that this is not possible as long as $k$ is large enough.

\begin{lemma}\label{pres0}
If $\phi \in \aut$ has range $n_k$, and for some $x \in X$, $\phi x$ has $A_k$-words at the same locations as $x$, then $\phi$ preserves locations of $A_k$-words.
\end{lemma}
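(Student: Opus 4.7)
The plan is a minimality-plus-continuity argument. For each $z \in X$, unique decomposability (Lemma~\ref{decomp}) assigns a well-defined phase $f(z) \in \zz/n_k\zz$, namely the residue modulo $n_k$ of any $A_k$-boundary position in $z$. The hypothesis of the lemma is exactly $f(\phi x) = f(x)$, and the goal is to promote this to $f(\phi z) = f(z)$ for every $z \in X$, since this is precisely what it means for $\phi$ to preserve locations of $A_k$-words.

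First I would verify that $f$ is continuous. Any window of length at least $2n_{k+1}$ in a point of $X$ must contain a full $A_{k+1}$-word, and hence contains the length-$2b_{k+1}q_{k+1}$ run of identical $A_k$-words that marks the start of each $A_{k+1}$-word. The $d_{k+1}=2b_{k+1}q_{k+1}$ clause from Lemma~\ref{decomp} ensures no other run of this length appears inside a single $A_{k+1}$-word; at junctions between consecutive $A_{k+1}$-words, an ending run and the next starting run may fuse into a strictly longer run, but the exact $A_{k+1}$-boundary inside the merged run is recoverable from its length together with the explicit construction of the $A_{k+1}$-words. Either way, the window determines an $A_{k+1}$-boundary, hence all $A_k$-boundaries and $f(z)$, so each $f^{-1}(s)$ is a union of cylinders and thus open.

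Second, consider $B = \{z \in X : f(\phi z) = f(z)\}$. Since $\phi$ and $f$ are continuous and $\zz/n_k\zz$ is discrete, $B$ is clopen. Unique decomposability of $\phi x$ combined with the hypothesis forces $f(\phi x) = f(x)$ (the progression of $A_k$-word positions inherited from $x$ must coincide with the unique decomposition of $\phi x$), so $B$ is nonempty. A direct check using $\phi \circ \sigma = \sigma \circ \phi$ and $f(\sigma z) \equiv f(z) - 1 \pmod{n_k}$ yields $f(\phi(\sigma z)) - f(\sigma z) = f(\phi z) - f(z)$, so $B$ is $\sigma$-invariant. Because $X$ is minimal (Lemma~\ref{min} applies, as every $A_{k+1}$-word visibly contains every $A_k$-word by construction), the nonempty clopen $\sigma$-invariant set $B$ must equal $X$. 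Translating back, $f(\phi z) = f(z)$ for every $z \in X$ is the desired conclusion, since in this construction a length-$n_k$ subword at position $m$ lies in $A_k$ precisely when $m$ is a standard boundary.

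The hardest step is the local-recognizability claim underlying continuity of $f$: one must handle the $A_{k+1}$-word junctions, where an ending run from one $A_{k+1}$-word and the starting run from the next can merge into a single long run of the same $A_k$-word, and verify that the hidden $A_{k+1}$-boundary inside such a merged run is still unambiguously identifiable from the explicit combinatorics (crucially, that a standard starting run has length exactly $2b_{k+1}q_{k+1}$ while any merged run is strictly longer). Once this local recognizability is in hand, the minimality/invariance argument is clean, and, interestingly, the range-$n_k$ hypothesis on $\phi$ plays no explicit role beyond what is automatic for any automorphism.
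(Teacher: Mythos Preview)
Your argument is correct and takes a genuinely different route from the paper. The paper's proof is combinatorial and leans on the range-$n_k$ hypothesis in an essential way: since $\phi$ has range $n_k$ with block map $\Phi$, the image $(\phi y)([j,j+n_k))$ depends only on the three-$A_k$-word window $y([j-n_k,j+2n_k)) = uvw$; by construction, every legal such triple (those with $u=v$ or $v=w$) already occurs inside the single $A_{k+1}$-word $x([0,n_{k+1}))$, so the hypothesis at $x$ alone forces $\Phi(uvw) \in A_k$ for all legal triples, which is exactly preservation. Your approach instead packages unique decomposability as continuity of the phase function $f$ and then uses that $\{z : f(\phi z)=f(z)\}$ is a nonempty clopen $\sigma$-invariant set, hence all of $X$ by minimality. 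As you observe, this never invokes the range bound, so you actually prove the lemma for arbitrary automorphisms. The trade-offs: your route is more conceptual and more general, but the local-recognizability step is really a restatement of the inductive proof of Lemma~\ref{decomp} (the $A_j$-phase is determined locally for each $j\le k$, and then the $A_{k+1}$-boundaries are read off from the long runs), and it would be cleaner to cite that directly rather than redo the junction case analysis. The paper's route is shorter and also delivers the explicit statement $\Phi(uvw)\in A_k$ for every legal triple, which is reused verbatim at the start of Lemma~\ref{pres2}; if you follow your approach, you will need to recover that fact separately there.
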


\begin{proof}

Choose any $k$, $\phi$ with range $n_k$ and inducing block map $\Phi$, $x \in X$, and suppose that $\phi x$ has $A_k$-words at the same locations as $x$. By shifting $x$, we may assume without loss of generality that $x([0, n_{k+1})) \in A_{k+1}$. %Suppose that $|i| < n_k/2$ and $(\phi \circ \sigma^i x)([mn_k + 1, (m+1)n_k]) \in A_k$ for all $m$, i.e. $(\phi \circ \sigma^i) x$ has $A_k$-words at the same locations as $x$. Note that $\phi \circ \sigma^i$ has range $n_k$. %We define $\phi' = \phi \circ \sigma^i$.
Then $x([0, n_{k+1}))$, as an $A_{k+1}$-word, contains all concatenations of three $A_k$-words which are in $L(X)$, i.e., for all
$u,v,w \in A_{k}$ such that $uvw \in L(X)$ (i.e. $u=v$ or $v=w$), there exists $n_k \leq i < n_{k+1} - 2n_k$ so that $x([i-n_k, i+2n_k)) = uvw$. This implies that $\Phi(uvw) = (\phi x)([i, i+n_k)) \in A_k$.

For any $y \in X$ and $j \in \mathbb{Z}$, if $y([j, j+n_k)) \in A_k$, then $y([j-n_k, j+2n_k)) = uvw$ for some $u,v,w \in A_k$, implying that $(\phi y)([j, j+n_k)) = \Phi(uvw) \in A_k$. % (We used here the fact that $\phi$ has range $n_k$, and so $(\phi y)([n_k, 2n_k))$ depends only on $y([0,3n_k))$).
Since $y$ was arbitrary, $\phi$ preserves locations of $A_k$-words.

\end{proof}

We now wish to show that all automorphisms of $X$ preserve locations up to a shift.
%(Preserving locations can be thought of as acting like the identity on the natural odometer factor of $X$ induced by locations of $A_k$-words, but this additional context will not be necessary for our proofs.)

\begin{lemma}\label{pres1}
For every $\phi \in \textrm{Aut}(X, \sigma)$, there exists $i$ so that $\sigma^i \circ \phi$ preserves locations.% of $A_k$-words for all $k$, i.e. for all $x \in X$, $k$, and $m < n$, if $x([m,n])$ is an $A_k$-word, then $(\sigma^i \circ \phi x)([m,n])$ is an $A_k$-word.
\end{lemma}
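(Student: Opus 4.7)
The strategy is to use Lemma~\ref{pres0} at a sufficiently coarse scale to select a single shift $i$, and then show by induction on the hierarchy level that the corrected automorphism $\psi := \sigma^i \circ \phi$ preserves locations at every finer level as well. Given $\phi \in \aut$ of range $N$, I would choose $k_0$ large enough that $n_{k_0} > 2N$ and apply Lemma~\ref{pres0} at level $k_0$ to obtain $i$ with $|i| \le n_{k_0}/2$ for which $\psi = \sigma^i\circ\phi$ preserves $A_{k_0}$-word locations. Since $A_j$-boundaries refine $A_{k_0}$-boundaries for $j \leq k_0$, the same $\psi$ automatically preserves $A_j$-locations for all $j \leq k_0$, and moreover $\psi$ has range strictly less than $n_{k_0}$, hence strictly less than $n_k$ for each $k \geq k_0$.

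I would then prove by induction on $k \geq k_0$ that $\psi$ preserves $A_k$-word locations. For the inductive step, the key observation is that the range of $\psi$ being much less than $n_k$, combined with $\psi$ preserving $A_k$-locations, forces $\psi$ to act on the $A_k$-decomposition of each $x \in X$ as a sliding block code of range at most $1$ on the alphabet $A_k$. In particular, every run of length $L$ of a single $A_k$-word in $x$ gives rise to a run of length at least $L - 2$ of a single $A_k$-word in $\psi(x)$, starting one $A_k$-position to the right. By Lemma~\ref{pres0} applied at level $k+1$ (legitimate since $\psi$'s range is below $n_{k+1}$), it is enough to exhibit one $x \in X$ for which $\psi(x)$ has its $A_{k+1}$-boundaries aligned with those of $x$.

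Choose $x$ with $x([0, n_{k+1})) \in A_{k+1}$. By the construction in the proof of Theorem~\ref{mainthm} and Lemma~\ref{decomp}, each $A_{k+1}$-word in $X$ contains a run of length $d_{k+1} = 2 b_{k+1} q_{k+1}$ of a single $A_k$-word at its start and at its end, while every interior run has length $O(b_{k+1})$; since $q_{k+1} \geq 2$ and $b_{k+1}$ is large by construction, $d_{k+1} - 2$ strictly exceeds any interior run length. Hence the image runs of length at least $d_{k+1} - 2$ produced by $\psi$ must sit at $A_{k+1}$-boundaries of $\psi(x) \in X$. A careful endpoint-tracking argument, exploiting the asymmetry between the initial and terminal runs of an $A_{k+1}$-word (the initial run has exact length $d_{k+1}$, while the terminal region mixes two $A_k$-words depending on the coset representative index), pins down the $A_{k+1}$-boundaries of $\psi(x)$ to coincide with those of $x$. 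Lemma~\ref{pres0} at level $k+1$ then promotes this to global preservation of $A_{k+1}$-locations, completing the induction.

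The main obstacle is this final alignment argument: one must carefully track how the two-unit fuzziness at each end of an image run interacts with the specific start/end structure of $A_{k+1}$-words, rule out hypothetical shifts of $\psi(x)$'s $A_{k+1}$-decomposition by small multiples of $n_k$, and separately handle the case in which consecutive $A_{k+1}$-words in $x$ share the same $H_k$-coset part, so their boundary long runs merge in $x$ into a run of length at least $2d_{k+1}$ straddling the $A_{k+1}$-boundary.
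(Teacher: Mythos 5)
There is a genuine gap, and it sits exactly where you flagged ``the main obstacle.'' Your plan fixes the shift $i$ once, at level $k_0$, via Lemma~\ref{pres0}, and then tries to prove that this same $\psi=\sigma^i\circ\phi$ preserves $A_{k+1}$-locations for every $k\geq k_0$. But preserving $A_{k_0}$-locations only determines the correcting shift modulo $n_{k_0}$: a map such as $\sigma^{n_{k_0}}\circ\phi_{k,h}$ preserves $A_{k_0}$-locations while shifting the $A_{k_0+1}$-frame by $n_{k_0}$, and nothing in your setup rules out that the $i$ you chose (the representative with $|i|\leq n_{k_0}/2$) differs from the globally correct shift by $\pm n_{k_0}$. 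So the statement you propose to prove in the first inductive step may simply be false for your $\psi$, and no ``endpoint-tracking'' can rescue it. The paper's proof acknowledges this: after the first application of Lemma~\ref{pres0} it examines the image of a $4b_{k+1}q_{k+1}$-fold repetition of a single $A_k$-word straddling an $A_{k+1}$-boundary, observes that the image is a long run of one $A_k$-word (so the boundary is only located up to $\pm n_k$), and allows a \emph{second} correction $\sigma^j$ with $j\in\{0,\pm n_k\}$ before invoking Lemma~\ref{pres0} at level $k+1$. Any correct proof must either include such a correction or give a genuinely finer argument at that step; yours does neither.

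The second problem is the mechanism you invoke for exact alignment at higher levels. The ``asymmetry between the initial and terminal runs of an $A_{k+1}$-word'' is not usable as stated: both the initial and the terminal run of $w^{(k+1)}_{g'r_i^{(k+1)}}$ are powers of the \emph{same} $A_k$-word $w^{(k)}_{g'h_2^{(k)}}$, so when adjacent $A_{k+1}$-words share the $H_k$-component $g'$ the two runs merge, and the image (under your range-$1$ block action on $A_k$-symbols) is a single long run that locates the boundary only up to $\pm n_k$ --- exactly the two-block fuzziness you concede. Even when the adjacent runs are runs of distinct $A_k$-words, you would still need to know that their image runs are distinct, which requires an argument using the small range of $\psi^{-1}$ that your sketch never supplies. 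The paper eliminates both difficulties at levels $m>k+1$ with a specific device absent from your proposal: it takes two \emph{distinct} $A_m$-words $y\neq z$ that agree on their first and last $2n_m$ letters (e.g.\ $w^{(m)}_{\textrm{id}}$ and $w^{(m)}_{r_2^{(m)}}$), places them adjacent with their long repetitions at an $A_{m+1}$-boundary, and uses the fact that the agreement length exceeds twice the range of $\psi$ to force the image to be $a^{2b_{m+1}q_{m+1}-1}b^{2b_{m+1}q_{m+1}-1}$ with the $a$-to-$b$ transition pinned exactly at the original boundary, and uses the range of $\psi^{-1}$ to get $a\neq b$; only then does Lemma~\ref{pres0} apply with no further shift. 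Without the extra one-time correction $j$ and without this agree-on-the-ends configuration (or a substitute), your induction does not go through.
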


\begin{proof}
Choose any $\phi \in \textrm{Aut}(X, \sigma)$, and choose $k$ so that both $\phi$ and $\phi^{-1}$ have range $n_k/2$.
Choose any $x \in X$ with $x([0, n_{k+1})) \in A_{k+1}$. There clearly exists $i$ with $|i| < n_k/2$ so that
$(\phi \circ \sigma^i x)([mn_k, (m+1)n_k)) \in A_k$ for all $m$. Write $\phi' := \phi \circ \sigma^i$. By additivity of ranges under composition, $\phi'$ and $\phi'^{-1}$ have range $n_k$, and by Lemma~\ref{pres0}, $\phi'$ preserves locations of $A_k$-words.
%Recall that $x([1, n_{k+1}])$, as an $A_{k+1}$-word, contains all possible concatenations of three $A_k$-words. Therefore, for all $u,v,w \in A_{k+1}$, and any $y \in X$, if $y([1, 3n_k]) = uvw$, then $(\phi' y)([n_k + 1, 2n_k]) \in A_k$. In other words, $\phi'$ preserves locations of $A_k$-words. We note that both $\phi'$ and $\phi'^{-1}$ have range less than $n_k$.

Now, consider any $x \in X$ for which $x([-n_{k+1}, 0))$ and $x([0, n_{k+1}))$ are both in $A_{k+1}$, and $x([-n_{k+1}, 0))$
ends with the same $2b_{k+1} c_{k+1}$-fold repetition of an $A_k$-word that $x([0, n_{k+1}))$ begins with. Then
$x([-2b_{k+1} c_{k+1} n_k, 2b_{k+1} c_{k+1} n_k)) = w^{4b_{k+1} c_{k+1}}$ for some $w \in A_k$. %Since all $A_{k+1}$-words begin with a eight-fold repetition of an $A_k$-word, $x([1, 8n_k]) = wwwwwwww$ for some $w \in A_k$.
Since $\phi$ preserves locations of $A_k$-words and has range less than $n_k$,
$(\phi' x)([(-2b_{k+1} c_{k+1} + 1)n_k, (2b_{k+1} c_{k+1} - 1)n_k)) = v^{4b_{k+1} c_{k+1} - 2}$ for some $v \in A_k$. However, the only
$(4b_{k+1} c_{k+1} - 2)$-fold repetitions of $A_k$-words within points of $x$ are within the $(4b_{k+1} c_{k+1})$-fold repetitions occurring across the boundary of some pairs of $A_{k+1}$-words. Therefore, there exists some $j \in \{0, \pm n_k\}$ for which
$((\phi' \circ \sigma^j) x)([-2b_{k+1} c_{k+1} n_k, 2b_{k+1} c_{k+1} n_k]) = v^{4b_{k+1} c_{k+1}}$, which implies that
$((\phi' \circ \sigma^j) x)([-n_{k+1}, 0))$ and $((\phi' \circ \sigma^j) x)([0, n_{k+1}))$ are $A_{k+1}$-words, and so that
$(\phi' \circ \sigma^j) x$ has $A_{k+1}$-words in the same locations as $x$. Define $\phi'' := \phi' \circ \sigma^j$; since $\phi', \phi'^{-1}$ had range $n_k$ and $|j| \leq n_k$, $\phi''$ and $\phi''^{-1}$ have range $2n_k$ by additivity of ranges under composition. Since $2n_k < n_{k+1}$, Lemma~\ref{pres0} implies that $\phi''$
 preserves locations of $A_{k+1}$-words. We claim that in fact $\phi''$ preserves locations for all $A_m$-words for $m > k$, which will complete the proof since $\phi'' = \phi \circ \sigma^{i+j}$. We prove by induction on $m$. The base case $m = k+1$ is completed, so we assume that $m > k$ and that $\phi''$ preserves locations of $A_m$-words.

Choose two $A_{m}$-words $y, z$ which are not equal, but agree on their first and last $2n_m$ letters (for instance,
$w_{\textrm{id}}^{(m)}$ and $w_{r_2^{(m)}}^{(m)}$ would work.) Choose $x \in X$ so that $x([-n_{m+1}, 0))$ and $x([0, n_{m+1}))$ are both in $A_{m+1}$, $x([-n_{m+1}, 0))$ ends with $y^{2b_{m+1} c_{m+1}}$, and $x([0, n_{m+1}))$ begins with $z^{2b_{m+1} c_{m+1}}$.

Since $\phi''$ preserves locations of $A_{m}$-words, all words
$(\phi'' x)([in_{m}, (i+1)n_{m}))$ for $-2b_{m+1} c_{m+1} \leq i < 2b_{m+1} c_{m+1}$ are $A_{m}$-words.
Since $\phi''$ has range $2n_k$, each $(\phi'' x)([in_{m}, (i+1)n_{m}))$ depends only on
$x(in_{m} - 2n_k, (i+1)n_{m} + 2n_k))$. Since $y$ and $z$ agree on their first and last $2n_m \geq 2n_k$ letters and
\[
x([-2b_{m+1} c_{m+1} n_{m}, 2b_{m+1} c_{m+1} n_{m})) = y^{2b_{m+1} c_{m+1}} z^{2b_{m+1} c_{m+1}},
\]
$(\phi'' x)([in_{m}, (i+1)n_{m}))$ is the same for $-2b_{m+1} c_{m+1} < i < 0$ (call this $A_{m}$-word $a$) and for
$0 \leq i < 2b_{m+1} c_{m+1} - 1$ (call this $A_{m}$-word $b$).

If $a = b$, then since $\phi''^{-1}$ has range $2n_k < n_{m}$, it would have to be the case that $y = z$
(formally, if $\Psi$ is an inducing block map for $\phi''^{-1}$, then since $a = (\phi'' x)([-2n_{m} - 2n_k, -n_{m} + 2n_k) = (\phi'' x)([n_{m} - 2n_k, 2n_{m} + 2n_k) = b$, it must be the case that $y = \Psi(a) = x([-2n_{m}, -n_{m}) = x([n_{m}, 2n_{m}) = \Psi(b) = z$.) This is a contradiction, so $a \neq b$. Now, we know that
$(\phi'' x)([(-2b_{m+1} c_{m+1} + 1)n_{m}, (2b_{m+1} c_{m+1} - 1)n_{m})) = a^{2b_{m+1} c_{m+1} - 1} b^{2b_{m+1} c_{m+1} - 1}$
for $a \neq b \in A_{m}$, and the only occurrence of such a word is with midpoint at the border between $A_{m+1}$-words. Therefore,
$(\phi'' x)([-n_{m+1}, 0))$ and $(\phi'' x)([0, n_{m+1}))$ are $A_{m+1}$-words, and so $\phi'' x$ has $A_{m+1}$-words at the same locations as $x$. Since $\phi''$ has range $2n_k < n_{m+1}$, $\phi''$ preserves locations of $A_{m+1}$-words by Lemma~\ref{pres0}. This completes the induction and the proof.

\end{proof}

The following lemma now completes the proof of Theorem~\ref{mainthm}.

\begin{lemma}\label{pres2}
If $\phi \in \textrm{Aut}(X, \sigma)$ preserves locations, then there exist $k \in \mathbb{N}$ and $h \in H_k$ so that $\phi = \phi_{k,h}$.
\end{lemma}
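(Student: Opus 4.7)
The plan is to choose a level $k$ large enough that $\phi$ acts on the $A_k$-decomposition as a simple re-labeling, and to identify that re-labeling as $\pi_{k,h}$ for a suitable fixed $h \in H_k$. Specifically, I pick $k$ so that $\phi$'s range is strictly less than $n_k$. Since $\phi$ preserves locations at every level, it induces on the $A_k$-decomposition of each $x \in X$ a range-one sliding block code, given by a block map $\Psi_k$ on allowed $A_k$-triples $(u,v,w)$, and it sends each $A_{k+1}$-word occurring at position $S$ in $x$ to an $A_{k+1}$-word at the same position in $\phi(x)$.

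Next I fix an occurrence of the $A_{k+1}$-word $w_{g'r_i^{(k+1)}}^{(k+1)}$ in some $x \in X$ and write its image in $\phi(x)$ as $w_{g_*'r_{i_*}^{(k+1)}}^{(k+1)}$. The inside of this $A_{k+1}$-word contains long ``pure'' blocks of repeated $A_k$-words: the initial $2b_{k+1}q_{k+1}$-fold block of $w_{g'h_2^{(k)}}^{(k)}$, the two tail blocks $\bigl(w_{g'h_1^{(k)}}^{(k)}\bigr)^{ib_{k+1}}$ and $\bigl(w_{g'h_2^{(k)}}^{(k)}\bigr)^{b_{k+1}(3q_{k+1}-i)}$, and the length-$b_{k+1}$ blocks inside each middle pair. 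On the interior of such a block, the output of $\Psi_k$ applied to the constant triple $(v,v,v)$ must, by shift-invariance, match the $A_k$-word at the corresponding position of $w_{g_*'r_{i_*}^{(k+1)}}^{(k+1)}$. Comparing the $h_1$-to-$h_2$ transition points in the tails of the input and output $A_{k+1}$-words forces $i = i_*$ (otherwise $\Psi_k(v,v,v)$ would be required to take two distinct values on the same input triple). Once $i = i_*$, the matching yields $\Psi_k(v,v,v) = w_{g_*'h_a^{(k)}}^{(k)}$ when $v = w_{g'h_a^{(k)}}^{(k)}$, i.e.\ left-multiplication of subscripts by $h := g_*'(g')^{-1} \in H_k$; the handful of transition triples are pinned down the same way. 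Hence on this $A_{k+1}$-word, $\phi$ acts as $\pi_{k,h}$, which by Lemma~\ref{consist} equals $\pi_{k+1,h}$, so $\phi$ sends $w_{g'r_i^{(k+1)}}^{(k+1)}$ to $w_{hg'r_i^{(k+1)}}^{(k+1)}$.

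Running the same argument one level higher, within each $A_{k+2}$-word $\phi$ acts on the $A_{k+1}$-decomposition by left-multiplication by some $H \in H_{k+1}$; compatibility with the level-$k$ conclusion together with Lemma~\ref{consist} forces $H = h$ and hence $H \in H_k$. Iterating, within each $A_m$-word ($m \geq k+2$), $\phi$ acts as $\phi_{k,h_m}$ for a single $h_m \in H_k$. To show $h_m$ is independent of the $A_m$-word, fix two $A_k$-positions $P_1,P_2$ in some $x \in X$, pick any $y \in X$, and apply minimality to the finite pattern $x([P_1-R,P_2+n_k+R))$: for $M$ sufficiently large, each $A_M$-word of $y$ has a long enough interior to contain an occurrence of this pattern, so we can find an occurrence in $y$ with both corresponding positions lying inside a single $A_M$-word of $y$. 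Within that $A_M$-word the $h$-value is constant by the previous step, and since $\phi$ is determined by its range-$R$ window, the $h$-values at $P_1$ and $P_2$ in $x$ must both equal this common value. Therefore a single $h \in H_k$ works for every position in every $x \in X$, giving $\phi = \phi_{k,h}$.

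The main obstacle is the local analysis in the second paragraph: the careful book-keeping of shift-invariance of $\Psi_k$ on constant triples, combined with the matching of the $h_1/h_2$ transition positions in the tails of input and output $A_{k+1}$-words, is what simultaneously fixes $i = i_*$ and identifies $\phi$'s action as left-multiplication by a specific $h \in H_k$. Once this is in place, the propagation via Lemma~\ref{consist} and the final minimality argument are both comparatively straightforward.
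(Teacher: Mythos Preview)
Your argument is correct and follows essentially the same strategy as the paper: reduce $\phi$ to a range-one block code $\Psi_k$ on $A_k$-triples, then use the combinatorics of a single $A_{k+1}$-word to identify $\Psi_k$ with $v \mapsto \pi_{k,h}(v)$. The executions differ in detail. The paper observes that the first $2b_{k+1}q_{k+1} + b_{k+1}|A_k|^2$ concatenated $A_k$-words of $w_{g'r_i^{(k+1)}}^{(k+1)}$ are independent of $i$ and already contain every allowed triple $(u,v,w)$; comparing these positions in $w_{\textrm{id}}^{(k+1)}$ and its image yields $\Psi_k(u,v,w)=\pi_{k,h}(v)$ directly, without first determining $i_*$. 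A separate counting argument (the multiplicity of $w_{h_2^{(k)}}^{(k)}$ in the decomposition) then forces $i_*=1$. Your route via matching the tail transition is also valid, but your third paragraph is unnecessary: $\Psi_k$ is a single globally defined block map, and one $A_{k+1}$-word already contains every allowed $A_k$-triple (the middle section lists all pairs, and the paper records that only triples with $u=v$ or $v=w$ occur in $L(X)$). Hence the identification $\Psi_k(u,v,w)=\pi_{k,h}(v)$ obtained in your second paragraph is automatically global, and $\phi=\phi_{k,h}$ follows immediately with no propagation to higher levels and no appeal to minimality.
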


\begin{proof}

Assume that $\phi \in \aut$ preserves locations, fix $k$ so that $\phi$ has range $n_k$, and let $\Phi$ be an inducing block map for $\phi$. Choose any $x \in X$ with $x([0, n_k)) \in A_k$. Then $x([in_k, (i+1)n_k)) \in A_k$ for all $i$, and by assumption,
$(\phi x)([in_k, (i+1)n_k)) \in A_k$ for all $i$ as well. Then, exactly as in the proof of Lemma~\ref{pres0},
for all $u,v,w \in A_k$ with $u=v$ or $v=w$, $\Phi(uvw) \in A_k$. %$\phi$ can be thought of being induced by a map $\Phi: S \rightarrow A_k$, where$S = \{(u,v,w) \ : \ u,v,w \in A_k, u=v \textnormal{ or } v=w\}$ (I removed parentheses around $u=v$ and $v=w$) is the set of triples of $A_k$-words whose concatenation is in $L(X)$. %(MIGHT NEED TO SAY MORE HERE)
We will prove that $\phi$ is equal to some $\phi_{k,h}$ in two steps. First, we will show that $\Phi(uvw)$ depends only on $v$, implying that $\phi$ is induced by a permutation of $A_k$-words in the sense of (\ref{alpha}). Then, we show that the only such permutations which send $A_{k+1}$-words to $A_{k+1}$-words are of the form $\pi_{k,h}$.

To see that $\Phi(uvw)$ depends only on $v$, choose any $(u,v,w) \neq (u', v, w') \in S$.
By its definition, the $A_{k+1}$-word $w_{\textrm{id}}^{(k+1)}$ contains both $uvw$ and $u'vw'$ somewhere in its
first $(2b_{k+1} c_{k+1} + b_{k+1}|A_k|^2)$ concatenated $A_k$-words, say that
$w_{\textrm{id}}^{(k+1)}([(p-1)n_k, (p+2)n_k)) = uvw$ and $w_{\textrm{id}}^{(k+1)}([(q-1)n_k, (q+2)n_k)) = u'vw'$ for some
$p,q \leq 2b_{k+1} c_{k+1} + b_{k+1}|A_k|^2$.

Choose any $x \in X$ with $x([0, n_{k+1})) = w_{\textrm{id}}^{(k+1)}$. Since $\phi$ preserves locations of $A_{k+1}$-words, $(\phi x)([0, n_{k+1}))$ is some $A_{k+1}$-word $w_{h r_i^{(k+1)}}^{(k+1)} = \pi_{k,h} w_{r_i^{(k+1)}}^{(k+1)}$ with $h \in H_k$ and $1 \leq i \leq q_{k+1}$. Note that $w_{r_i^{(k+1)}}^{(k+1)}$ begins with the same initial $2b_{k+1} c_{k+1} + b_{k+1}|A_k|^2$ concatenated $A_k$-words as $w_{\textrm{id}}^{(k+1)}$, and so contains $v$ starting at locations $pn_k$ and $qn_k$. But $\pi_{k,h}$ is just a permutation of $A_k$-words, and so $w_{h r_i^{(k+1)}}^{(k+1)}$ contains the same $A_k$-word $\pi_{k,h}(v)$ at those locations. Therefore, $\Phi(uvw) =
\Phi(u'vw') = \pi_{k,h}(v)$. Since $(u,v,w)$ and $(u', v, w')$ were arbitrary, we've shown that $\Phi(uvw)$ depends only on $v$,
i.e. there is a permutation $\tau$ of $A_k$-words so that $\phi = \alpha_{\tau}$ as in (\ref{alpha}).

%We have now reduced to the case where $\Phi$ is a map from $A_k$ to itself.
Consider the image of the $A_{k+1}$-word $w_{\textrm{id}}^{(k+1)}$ under (coordinatewise application of) $\phi = \alpha_\tau$. It must be another $A_{k+1}$-word since $\phi$ preserves locations, call it $w_{h r_i^{(k+1)}}^{(k+1)}$ for $h \in H_k$ and $1 \leq i \leq q_{k+1}$.
We note that $w_{h_2^{(k)}}^{(k)}$ occurs $b_{k+1} (5q_{k+1} + |A_k| - 1)$ times in the decomposition of $w_{\textrm{id}}^{(k+1)}$ into $A_{k}$-words, and so some $A_k$-word must appear this many times in $w_{h r_i^{(k+1)}}^{(k+1)}$.
 It is not hard to check that this implies $i = 1$ (the maximum number of times an $A_k$-word appears becomes smaller if
$i > 1$, since then the final self-concatenation in the definition of $w_{h r_i^{(k+1)}}^{(k+1)}$ is shorter).
Therefore, $\Phi(w_{\textrm{id}}^{(k+1)}) = w_{h r_1^{(k+1)}}^{(k+1)} = w_h^{(k+1)}$ for some $h \in H_k$.

Recall that $w_{\textrm{id}}^{(k+1)}$ contains every $A_k$-word, and so since $\phi = \alpha_\tau$ and $\pi_{k,h}$ map $w_{\textrm{id}}^{(k+1)}$ to the same word $w_h^{(k+1)}$, it must be the case that $\tau = \pi_{k,h}$. Therefore, $\phi = \phi_{k,h}$, and since $\phi$ was arbitrary, we are done.

\end{proof}

By Lemmas~\ref{pres1} and \ref{pres2}, every $\phi \in \textrm{Aut}(X, \sigma)$ can be written as $\sigma^j \phi_{k,h}$ for some $j \in \mathbb{Z}$, $k \in \mathbb{N}$, and $h \in H_k$, and so $\grp$ is isomorphic to $G$, completing the proof.

\end{proof}

\begin{remark}
We can in fact say a little more about $\aut$ for this construction:
%The construction used in the proof of Theorem~\ref{mainthm} actually yields a subshift $(X,\sigma)$ such that
$\aut$ is isomorphic to $\mathbb{Z} \times G$, with the $\mathbb{Z}$ corresponding to $\sigma$. To see this, recall that $G$ is isomorphic to $G_{\phi}$, and then consider the map $\alpha \colon G_{\phi} \times \mathbb{Z} \to \aut$ defined by $\alpha(\phi_{h,k},n) = \phi_{h,k}\sigma^{n}$. It is easy to check that since $\sigma$ is in the center of $\aut$, $\alpha$ is a homomorphism. Lemmas~\ref{pres1} and~\ref{pres2} imply that $\alpha$ is surjective, and it is straightforward to check that $\alpha$ is also injective, and hence an isomorphism.
\end{remark}

\begin{remark}
It is natural to wonder whether the somewhat complex block concatenation subshifts could be replaced by the simpler subclass of Toeplitz subshifts in our constructions. In general this is not possible, since the automorphism group of a Toeplitz subshift is always abelian (see~\cite{DDMP2}).
\end{remark}

\begin{remark}
In Example 3.9 from \cite{BLR}, they construct a minimal subshift $X$ where the additive group of rationals $\mathbb{Q}$ embeds into $\aut$ (and outline alterations which would make this embedding an isomorphism). Since $\faktor{\mathbb{Q}}{\mathbb{Z}}$ is countable and locally finite, Theorem~\ref{mainthm} provides a different minimal subshift $X$ with $\grp = \faktor{\mathbb{Q}}{\mathbb{Z}}$.%, and since all groups are abelian, $\aut = \mathbb{Z} \times \faktor{\mathbb{Q}}{\mathbb{Z}}$ is isomorphic to $\mathbb{Q}$.
\end{remark}

%To see this, first note that, as a consequence of Lemmas~\ref{pres1} and~\ref{pres2}, any element $\phi$ in $\aut$ of finite order must actually be equal to $\phi_{k,h}$ for some $k,h$. Then if we let $N_{F}$ denote the subgroup of $\aut$ generated by elements of finite order, it is not hard to check that $N_{F}$ agrees with $G_{\phi}$. Furthermore, the subgroup $N_{F}$ is normal in $\aut$ and Lemmas~\ref{pres1} and~\ref{pres2} imply $\aut / N_{F}$ is isomorphic to $\mathbb{Z}$ with the image of $\sigma$ under the quotient map $\aut \to \aut / N_{F}$ yielding a generator. Then the factor map $\aut \to \aut / N_{F} = \aut / G_{\phi}$ clearly splits, so $\aut$ is isomorphic to the semidirect product of $G_{\phi}$ and $\langle \sigma \rangle$. But $\sigma$ is contained in the center of $\aut$, so this semidirect product is actually a direct product.

\bibliographystyle{plain}
\bibliography{LocFin}

\end{document}